\newcommand{\cleqn}{\setcounter{equation}{0}} 
\newtheorem{thm}{Theorem}[section]
\newtheorem{prop}[thm]{Proposition}
\newtheorem{defn}[thm]{Definition}
\newtheorem{cor}[thm]{Corollary}
\newtheorem{lem}[thm]{Lemma}
\newtheorem{sub-lemma}[thm]{Sub-Lemma}
\newtheorem{rem}[thm]{Remark}
\let\io =\displaystyle
\def \R{{\mathbb R}}
\def \Z{{\mathbb Z}}
\def\C{{\mathcal{ C}}}
\def\mE{{\mathbb{E}}}
\def\P{{\mathbb{ P}}}
\def\I{{\cal I}}
\def\L{{\mathcal{L} }}
\def\I{{\mathcal I}}
\def\cS{{\mathcal{S} }}
\def\o{\omega}
\def\1{{\mathbf 1}}
\def\capa{{\mathop{\rm Cap}}}
\def\er{{\stackrel {\rightharpoonup} { e} }}
\def\el{{\stackrel {\leftharpoonup} { e} }}
\def\eps{\varepsilon}
\def\es{{{\rm Es}}}
\def\bi{\bigbreak}
\def\bino{\bigbreak\noindent}
\def\sm{\smallbreak}
\begin{document}

\title{\textbf{ Existence of the harmonic measure for random walks
on  graphs and in random environments }}
\author{Daniel Boivin and  Clément Rau}
\date{}

\begin{abstract}
We give a sufficient condition for the existence of the
harmonic measure from infinity of transient random walks on weighted graphs.
In particular, this condition is verified by the random conductance
model on $\Z^d$, $d\geq 3$, when the conductances
 are i.i.d. and the bonds with positive conductance percolate.
The harmonic measure from infinity also exists
for  random walks on  supercritical clusters of ${\mathbb Z}^2$.
This is proved using  results of Barlow (2004).
\bigbreak\noindent
\it Keywords:\rm\  Harmonic measure, supercritical percolation clusters,
effective conductance, Harnack inequality, random conductance model.
\bigbreak\noindent
\it Subject Classification:\rm\  60J05, 60K35, 60K37
\end{abstract}

\maketitle
\markboth{DANIEL BOIVIN and CLEMENT RAU}{EXISTENCE OF THE HARMONIC MEASURE}
\thanks{}


\section{Introduction and results}
\cleqn

The harmonic measure from infinity  of a closed subset $A$ of $\R^d$, $d\geq 2$,
is the hitting distribution of the set $A $ by a $d$-dimensional
Brownian motion started at infinity. A detailed description of this measure is given
by Mörters and Peres in \cite[section 3.4]{MoPe10}.

Similarly, given a Markov chain on an infinite graph,
the harmonic measure of a finite subset of the graph
is defined as the hitting distribution of the set by the Markov chain starting at infinity.
 The  existence of the harmonic measure for the simple
symmetric random walk on $\Z^d$  is shown by Lawler
in \cite [chapter 2]{Law96} and it is extended to a wider class of random
walks on $\Z^d$  by Lawler and Limic  in
 \cite [section 6.5]{LaLi10}.

From these results, one might expect that the existence of the harmonic
measure for a Markov chain on $\Z^d$, $d\geq 2$,
relies on its Green function asymptotics.
The goal of this paper is to show that
actually, the existence of the harmonic measure is a
 fairly robust result in the sense that  it exists for a random
walk on a weighted graph
as soon as there is a weak form of Harnack inequality.
In particular, it is verified by a large family of fractal-like graphs
and by random conductance models
on $\Z^d$, $d\geq3$, given by a sequence of i.i.d. conductances
as soon as there is percolation of the positive conductances.
This is done using recent estimates of Andres, Barlow, Deuschel and
Hambly \cite{ABDH10}.

In the recurrent case, although we do not give a general sufficient
condition, we  show the existence
of the harmonic measure for the random walk on the supercritical
cluster of $\Z^2$
using   estimates of Barlow \cite{Bar04} and Barlow and Hambly
\cite{BaHa09}.

The results of \cite{ABDH10} for the random conductance model
are part of a long series of works which go back
to  homogenization
of divergence form elliptic operators with random coefficients
and to the investigation of the properties of the supercritical percolation
cluster.

Some highlights of the properties of the random walk on the
supercritical percolation cluster of $\Z^d$ is the 
proof
of the Liouville property for bounded harmonic
functions (see Kaimanovich \cite{Kai90} and \cite{BLS99})
 and the proof of  the transience  of the
walk when $d\geq 3$ by Grimmett, Kesten and Zhang \cite{GKZ93}.

In \cite{Bar04}, Barlow proved upper and lower gaussian estimates for
the probability transitions of a random walk on the supercritical
percolation cluster. These are then used to prove a Harnack inequality
\cite [Theorem 3]{Bar04}. The Liouville
property for positive harmonic functions on the percolation cluster
follows as well as an  estimate of the mean-square displacement of the walk.

Barlow's upper gaussian estimates were also used to prove 
the invariance principle for the random walk on supercritical percolation clusters
by \cite{SiSz04}, \cite{MaPi07}, \cite{BeBi07}. 
The invariance principle for the random walk on $\Z^d$ 
with independent conductances that are bounded below is proved in
\cite{BaDe10}. 

Here we show that the existence of the harmonic
measure  follows from the Green function
estimates of \cite[Theorem 1.2]{ABDH10}.
In the case of the two-dimensional percolation cluster,
we use both the elliptic and the parabolic Harnack inequalities
 of \cite{Bar04} and \cite{BaHa09}.

 Whenever the harmonic measure from  infinity exists, one can study  
external  diffusion-limited aggregates. Their growth is determined
by the harmonic measure which can also be interpreted as
the distribution of an electric field on the surface
of a grounded conductor with fixed charge of unity.
Recent simulations by  physicists of the harmonic measure in $\Z^d$ 
can be found in \cite{ASSZ09} and of percolation and Ising clusters
in \cite{ASZ08}. Analytic predictions  for  the harmonic measure of
two dimensional clusters are given by Duplantier in
\cite{Dup99} and \cite{Dup00}. See also the survey paper \cite{Bar93}.

In contrast, for the  internal diffusion-limited aggregates
of random walks on percolation clusters,
the limiting shape is described in \cite{She10} and \cite{DLYY}.

The values of the constants $c, C, C' \ldots$ may change at each
appearance
but they are always strictly positive and they do not depend 
on the environment. The minimum of $a$ and $b$ and the maximum of $a$ and $b$ are
respectively denoted by $a\wedge b$ and by $a\vee b$.

\subsection{Reversible random walks}

A weighted graph $(\Gamma, a)$ is given by a countably infinite set
$\Gamma$ and a symmetric function
$$a : \Gamma\times\Gamma\to\lbrack 0 ; \infty\lbrack$$
which verifies $a(x,y) = a(y,x)$ for all $x,y\in\Gamma$ and
$$ \pi(x) :=  \sum_{y\in\Gamma} a(x,y)>0\  \hbox{ for all }\  x\in\Gamma.$$

The weight $a(x,y)$ is also called the conductance
of the edge connecting $x$ and $y$ since the weighted graph can be
interpreted as an electrical or thermic network.

Given a weighted graph $(\Gamma, a)$, we will write
$x\sim y $ if $a(x,y)>0$. We will always assume that
 $(\Gamma, \sim )$ is an infinite, locally finite countable graph
without multiple edges. 
A path of length $n$ from $x$ to $y$ is a sequence
 $x_0, x_1,\ldots, x_n$ in $\Gamma$
such that $x_0=x$, $x_n=y$ and $x_{i-1}\sim x_{i}$
for all $1\leq i\leq n$.
The weighted graph $(\Gamma, a)$ is said to be
connected if $(\Gamma, \sim )$
is a connected graph, that is, for all $x,y\in\Gamma$
 there is a path of finite length from $x$ to $y$.
The graph distance between two vertices $x,y\in\Gamma$
will be denoted by  $D(x,y)$. It is the minimal length of a path from
$x$ to $y$ in the graph $(\Gamma,\sim)$. 
 The ball centered at $x\in\Gamma$ of radius $R$ will be denoted by
$B(x,R) := \{ y\in \Gamma; \ D(x,y) < R \}$. 

The random walk on the weighted graph $(\Gamma, a)$
is the Markov chain on $\Gamma$
with transition probabilities given by
\begin{equation}
p(x,y):=\frac{a(x,y) }{\pi(x)}, \quad x,y\in \Gamma.
\label{defprobtrans}
\end{equation}

We denote by $P_x$ the law of the random walk
starting at the vertex $x\in \Gamma$.
The corresponding expectation is denoted by  $E_x $.
The random walk   admits  reversible measures which are proportional
to  the measure $\pi(\cdot)$.

For $A\subset \Gamma$, we have the following definitions

\hskip1cm $\partial A  :=\{ y\in\Gamma ; \  y\notin A \
\hbox{and there is}\  x\in A \  \hbox{with} \  x\sim y\}$
and $\io \overline A :=\partial A\cup A$,

 \hskip1cm $\io \tau_A :=\inf \{k\geq 1; \ X_k\in A\}$ and
 $\overline{\tau}_A :=\inf \{k\geq 0; \ X_k\in A\}$

\hskip1cm  with the convention that $\inf\emptyset = \infty$,

\hskip1cm $D(x, A):=\inf\{D(x,y) ; y\in A\}$,

\hskip1cm  and for $u : \overline A\to\R$ the Laplacian is defined by
$$\L u(x) := \sum_{y\sim x} p(x,y)(u(y) - u(x)),\quad x\in A.$$

\hskip1cm A function $u : \overline A \to \R$ is {\it harmonic}  in $A$ if
for all $x\in A$, $ (\L u)(x) = 0$.

The {\it Green function} of the  random walk is defined by
\begin{equation}\label{defgreen}
G(x,y) := \sum_{k=0}^\infty p(x,y,k),\quad x,y\in\Gamma
\end{equation}
where $ p(x,y,k) :=  P_x(X_k=y)$ are the transition probabilities
of the walk. Note that $G(\cdot , y)$ is harmonic in $\Gamma\setminus \{y\}$.

For irreducible Markov chains, if $G(x,y)<\infty$ for some
$x,y\in\Gamma$ then $G(x,y)<\infty$ for all $x,y\in\Gamma$.
The random walk is {\it recurrent} if $G(x,y)=\infty$ for some
$x,y\in\Gamma$
otherwise we say that the walk is  {\it transient}.

\subsection{Results on the existence of the harmonic measure}

Let $X=(X_j)$ be a random walk on a connected weighted graph $(\Gamma,a)$.

The {\it hitting distribution } of a set $A$ starting from $x \in
\Gamma$ is given by 
$$ H_A(x,y):=P_x (X_{\tau_A} =y),\quad y\in A. $$
If $P_x ( \tau_A <+\infty) >0$ , we also consider
$$ \overline{H}_A(x,y):=P_x (X_{\tau_A} =y | \tau_A <+\infty). $$

The {\it harmonic measure} on a finite subset 
$A$ of $\Gamma $ is the hitting distribution from infinity, if it exists,
\begin{equation}\label{harmlimit}
{\bf H}_A(y) := \lim_{D(x, A)\to\infty} {\overline{ H}}_A(x,y),\quad y\in A.
\end{equation}

Our goal is to prove the existence of the harmonic measure
for all finite subsets of various weighted graphs.
The proof of the existence of the harmonic
measure given in \cite [section 6.5]{LaLi10} for random walks on $\Z^d$, 
relies on a Harnack
inequality and on Green function estimates.  Actually, it turns out
that only a weak form of Harnack inequality is needed.

In Theorem I, we  show that a weaker version of Harnack
inequality 
is a sufficient condition for the existence of the harmonic measure of
 transient graph. 
Moreover,  weak estimates of the Green function
imply the weak Harnack inequality.

As it happens for Brownian motion and for  simple random walks (see
for instance \cite{MoPe10},
\cite{Law96}),
the harmonic measure can be  expressed
in terms of  capacities.

The {\it capacity } of  $A$ with respect to $B$, for $A\subset
B\subset \Gamma$,  is defined by 
$$ {\mathop{\rm Cap}}_B(A):=\sum_{x\in A}\pi(x)P_x(\overline\tau_{B^c}<\tau_A). $$

The {\it escape probability} of a set $A$ is
defined by
$\io \es_A(x) :=  P_x(\tau_A=\infty)$ and the capacity of
a finite subset $A\subset\Gamma$ is defined by
$$ {\mathop{\rm Cap}}(A):=\sum_{x\in A}\pi(x)\es_A(x). $$

The main result for transient graphs is the existence of the harmonic measure for random
walks which verify the following weak Harnack inequality.

\begin{defn}\label{wHarineq}
We say that a weighted graph $(\Gamma, a)$
satisfies {\bf wH(}$C${\bf )},
 the weak Harnack inequality, if there is a constant $C \geq 1$ such that
  for all $x\in \Gamma$ and for all $R>0$ there is $R'=R'(x,R)$
such that  for any positive harmonic
function $u$ on $B(x, R')$,
$$\max_{B(x,R)} u \leq C \min_{B(x,R)} u.$$
\end{defn}

\bf Theorem I.\it\ \ 
Let $(\Gamma, a)$ be a weighted graph.

If $(\Gamma, a)$ is connected, transient and if it verifies
the weak Harnack inequality  {\bf wH(}$C${\bf )},

then for any finite subset $A \subset \Gamma$  the harmonic measure on
$A$ exists.
That is, for all $y\in A$,  the limit (\ref{harmlimit}) exists.

Moreover, we have:
$$ \lim_{D(x, A)\to\infty}  \overline{H}_A(x,y)= \lim_{m \rightarrow + \infty } H_A^m(y),$$
where, for $m$ large enough, 
$$ H_A^m(y)=  \frac{ \pi(y) P_y ( \tau_A > \tau_{\partial B(x_0,m)
  }) }{ {\mathop{\rm Cap}}_m(A)  }  \  $$
 where  $\io {\mathop{\rm
    Cap}}_m(A)$
is the capacity of $A$ with respect to $ B (x_0,m)$  for some
$x_0\in\Gamma$.
The limit does not depend on the choice of $x_0$.
\rm\bigskip

The following Green function estimates
imply the weak Harnack inequality.

 \begin{defn} 
We say that a  weighted graph $(\Gamma, a)$
satisfies the Green function estimate  {\bf\ref{GEgamma}}  for $\gamma>0$
if there are constants $0< C_i \leq C_s <\infty$
 and  if for all $z \in \Gamma$, there exists $R_z<\infty$
such that for all $x,y \in \Gamma $ with $D(x,y)\geq R_x \wedge R_y$ we have:
 \begin{equation}
 \tag{${\bf{GE_{\gamma}}}$}  \label{GEgamma}
 \frac{C_i}{D(x,y)^{\gamma}} 
\leq  G(x,y)\leq \frac{C_s}{D(x,y)^{\gamma}}.  
 \end{equation}
\end{defn}
This condition is a weak version of \cite [Definition 1]{Tel00}
where $\gamma$ is called a Greenian index.
It is used by Telcs \cite {Tel00} to give an upper bound for the probability
transitions of a Markov chain in terms of the growth rate of the volume and of the
Greenian index.

\begin{prop} \label{tintin}
Let $(\Gamma, a)$ be a weighted graph which verifies
{\bf (\ref{GEgamma}) }  for some  $\gamma > 0$.
Then the graph is connected, transient and 
  {\bf wH(}$C${\bf )} holds with $\io C=2^{\gamma} \frac{C_s}{C_i}. $
\end{prop}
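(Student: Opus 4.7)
The plan is to treat first the soft conclusions (connectedness and transience) and then the weak Harnack inequality, which is the main content. For the former, the two-sided bound in \eqref{GEgamma} forces $0 < G(x,y) < \infty$ for any pair of vertices with $R_x \wedge R_y \leq D(x,y) < \infty$; since $\Gamma$ is infinite and locally finite, such pairs exist at arbitrarily large distance from any vertex $x$, which yields both transience on the component of $x$ (from finiteness of $G$ on an irreducible chain) and connectedness of $\Gamma$ (from positivity of $G$ combined with a chaining argument).

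For the Harnack inequality the strategy is to reduce everything to a Green-function ratio bound that already carries exactly the claimed constant. Fix $x \in \Gamma$ and $R > 0$. Since $B(x,R)$ is finite, $R^* := \max\{R_y : y \in B(x,R)\}$ is finite, so I choose $R' = R'(x,R) \geq \max\{3R,\, R + R^*\}$. For any $y_1, y_2 \in B(x,R)$ and any $w \in \Gamma$ with $D(x,w) \geq R'$, the inequality $D(y_i,w) \geq R' - R \geq R_{y_i}$ allows me to invoke \eqref{GEgamma}, while $D(y_1,w) \geq 2R$ together with $D(y_1,y_2) \leq 2R$ gives the triangle bound $D(y_2,w) \leq D(y_1,y_2) + D(y_1,w) \leq 2D(y_1,w)$. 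Combining these I obtain
$$\frac{G(y_1,w)}{G(y_2,w)} \leq \frac{C_s}{C_i}\left(\frac{D(y_2,w)}{D(y_1,w)}\right)^{\gamma} \leq \frac{2^{\gamma} C_s}{C_i},$$
i.e., the desired inequality already for the special harmonic functions $G(\cdot,w)$.

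It remains to extend this bound to a general positive harmonic function $u$ on $B(x,R')$. My plan is to use the Poisson representation $u(y) = E_y[u(X_\tau)] = \sum_{w} P_y(X_\tau = w) u(w)$ with $\tau := \tau_{B(x,R')^c}$ (well-defined and finite a.s.\ since $B(x,R')$ is finite), reducing the problem to a pointwise upper bound of the form $P_{y_1}(X_\tau = w)/P_{y_2}(X_\tau = w) \leq 2^{\gamma} C_s/C_i$ uniform in the exit point $w$. Through the last-exit formula $P_y(X_\tau = w) = \sum_{v \sim w,\, v \in B(x,R')} G_{B(x,R')}(y,v)\, p(v,w)$ and the identity $G_{B(x,R')}(y,v) = G(y,v) - E_y[G(X_\tau,v)]$, the Poisson kernel can be related to the unkilled Green function already controlled in the previous paragraph.

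The main obstacle I foresee is the correction term $E_y[G(X_\tau,v)]$: since $v$ lies adjacent to the exit point $X_\tau$, \eqref{GEgamma} does not directly apply to the pair $(X_\tau,v)$, where the graph distance may be as small as $1$. Overcoming this will require either a further enlargement of $R'$ so that the correction is uniformly dominated by the leading term $G(y,v)$ via the upper bound of \eqref{GEgamma} applied to the remaining distant pairs, or an alternative passage from the Green-function Harnack to the general case, for instance by comparing $u$ directly against the test functions $G(\cdot,w)$ through the discrete maximum principle on $B(x,R')$.
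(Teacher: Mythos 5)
Your computation of the Green-function ratio bound is essentially the paper's, and the constant $2^{\gamma}C_s/C_i$ comes out the same way (the paper bounds the distance ratio by $(M+3)/M\leq 2$ using the shrinking parameter $M\geq 3$, while you use $D(y_2,w)\leq D(y_1,y_2)+D(y_1,w)\leq 2D(y_1,w)$; both are correct). The soft conclusions (transience, connectedness) are also handled in the same spirit.

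The genuine gap is exactly where you suspect it: the passage from the Harnack inequality for the functions $G(\cdot,w)$ to the Harnack inequality for an arbitrary positive harmonic $u$. Your first proposed fix does not work: in the decomposition $G_{B(x,R')}(y,v)=G(y,v)-E_y\!\left[G(X_\tau,v)\right]$, the correction term involves $G(X_\tau,v)$ with $D(X_\tau,v)$ possibly equal to $1$, and \textbf{(GE$_\gamma$)} gives no information at short range --- on a general weighted graph (think of the percolation cluster) $G$ at nearest-neighbour distance is unbounded, so no enlargement of $R'$ makes the correction subordinate to $G(y,v)$; indeed the ratio $G_{B}(y_1,v)/G_{B}(y_2,v)$ of two small differences is not controlled by the ratio $G(y_1,v)/G(y_2,v)$ alone. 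Your second alternative --- compare $u$ against the test functions $G(\cdot,w)$ via the maximum principle --- is the right idea, but it is not a routine maximum-principle argument: it is precisely the content of \emph{Boukricha's lemma} (\cite{Bou79}, see also \cite[p.~37]{Tel06}), which the paper invokes. That lemma, applied to nested balls $B_0\subset B_1\subset B_2\subset B_3=\Gamma$, asserts that for $u$ positive and harmonic on $B_2$,
$$\max_{B_0}u\ \leq\ \Bigl(\max_{x,y\in B_0}\ \max_{z\in\overline{B_2}\setminus B_1}\frac{G(x,z)}{G(y,z)}\Bigr)\ \min_{B_0}u,$$
so the full Harnack constant is exactly a sup of Green-function ratios over an annulus, which your first paragraph already bounds by $2^{\gamma}C_s/C_i$. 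Without stating and proving (or citing) this reduction, the argument is incomplete; the reduction is the nontrivial part of the proposition.

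One smaller remark: the claim that \textbf{(GE$_\gamma$)} forces connectedness is a little more delicate than ``positivity of $G$ at large distance plus chaining.'' If $x$ and $y$ lie in distinct components then $D(x,y)=\infty\geq R_x\wedge R_y$ and both sides of the two-sided bound degenerate to $0$, so the hypothesis is not obviously violated. The paper does not elaborate either, so this is not a defect specific to your write-up, but it deserves a word if you want the soft part to be airtight.
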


In the following corollaries, we describe some weighted graphs
 where the harmonic measure from infinity exists.

A weighted graph $(\Gamma, a)$ is said to be \it uniformly elliptic \rm
if there is a constant $c\geq 1$ such that for all edges $e$,
\begin{equation} \label{EL}
c^{-1}\leq a(e)\leq c .
\end{equation}

\begin{cor} \label{HDunif}
Let $(\Z^d,a)$, $d\geq 3$, be a  uniformly elliptic graph.

Then  for all finite subsets $A$ of $\Z^d$ and
for all $y\in A$,  the limit (\ref{harmlimit}) exists.

Moreover, we have:
$$ \lim_{{ |x|  \rightarrow + \infty}}  \overline{H}_A(x,y)= \lim_{m \rightarrow + \infty } H_A^m(y),$$
where $\io H_A^m(y)=  
\frac{ \pi(y) P_y ( \tau_A > \tau_{\partial B(0,m) }) }
{ {\mathop{\rm Cap}}_m(A)  }.$
\end{cor}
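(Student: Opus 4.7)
The strategy is to reduce the corollary to Theorem~I via Proposition~\ref{tintin}. It suffices to verify that a uniformly elliptic weighted graph on $\Z^d$, $d\geq 3$, satisfies the Green function estimate $(\ref{GEgamma})$ with $\gamma=d-2$; the proposition then supplies connectedness, transience and the weak Harnack inequality $\mathbf{wH}(C)$, and Theorem~I yields the desired conclusion.

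To verify $(\ref{GEgamma})$ with $\gamma=d-2$, I would invoke the two-sided Gaussian heat kernel estimates for uniformly elliptic random walks on $\Z^d$. These are classical: there exist constants $c_j>0$, depending only on the ellipticity constant $c$ in (\ref{EL}) and on the dimension $d$, such that for all $x,y\in\Z^d$ and all $n\geq 1\vee|x-y|$,
$$
\frac{c_1}{n^{d/2}}\exp\!\left(-\frac{c_2|x-y|^2}{n}\right)
\leq p(x,y,n)+p(x,y,n+1)
\leq \frac{c_3}{n^{d/2}}\exp\!\left(-\frac{c_4|x-y|^2}{n}\right).
$$
(They follow from a parabolic Harnack inequality of Delmotte type, the inputs being volume doubling and a Poincar\'e inequality, both trivial on $\Z^d$; the two-step form is forced by the parity of the walk.) Summing these estimates over $n$ in the defining series (\ref{defgreen}), by splitting at $n\sim|x-y|^2$ and evaluating the resulting Gaussian sums, one obtains the two-sided bound
$$
\frac{C_i}{|x-y|^{d-2}}\leq G(x,y)\leq\frac{C_s}{|x-y|^{d-2}}
$$
for all $x,y\in\Z^d$ with $|x-y|$ sufficiently large. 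Since the graph distance $D$ on $\Z^d$ coincides with the $\ell^1$ distance, this is exactly $(\ref{GEgamma})$, after choosing $R_z$ (which can in fact be taken independent of $z$) large enough.

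With $(\ref{GEgamma})$ established, Proposition~\ref{tintin} yields $\mathbf{wH}(C)$ with $C=2^{d-2}C_s/C_i$, together with transience. Theorem~I, applied with base point $x_0=0$, then provides both the existence of the limit $\lim_{D(x,A)\to\infty}\overline H_A(x,y)$ and its identification with $\lim_{m\to\infty}H_A^m(y)$ in the stated form. Finally, since $A$ is finite and the $\ell^1$ and Euclidean norms on $\Z^d$ are comparable, the condition $D(x,A)\to\infty$ is equivalent to $|x|\to\infty$, which gives the statement as written. The only non-routine ingredient is the Gaussian heat kernel estimate; all subsequent steps are purely mechanical applications of Proposition~\ref{tintin} and Theorem~I, so I do not anticipate any serious obstacle beyond quoting the correct form of the classical bounds.
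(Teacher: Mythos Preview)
Your proposal is correct and follows essentially the same route as the paper: verify $(\ref{GEgamma})$ with $\gamma=d-2$, then apply Proposition~\ref{tintin} and Theorem~I. The only difference is cosmetic---the paper cites \cite[Proposition~4.2]{Del99} directly for the Green function bounds, whereas you sketch their derivation from Delmotte's Gaussian heat-kernel estimates; the underlying argument is the same.
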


Indeed, by \cite [Proposition 4.2]{Del99}
the  Green function of a uniformly elliptic graph $(\Z^d,a)$,
$d\geq 3$, verifies the estimates {\bf (\ref{GEgamma}) }  with $\gamma =
d-2$.
The existence of the harmonic measure then follows from proposition
\ref{tintin} and Theorem I.

The harmonic measure also exists for a large class of
fractal like graphs with some regularity properties.
Various examples are given in \cite{Bar04b}.
See also \cite[section 1.1]{Tel06} and the references therein.

The volume of a ball $B(x,R)$ is defined by $V(x,R) := \sum_{x\in
  B(x,R)} \pi(x)$
and the mean exit time from the ball is $E(x,R):= E_x(\sigma_R)$
where $\sigma_R := \inf\{k\geq 0 ; X_k\notin B(x,R)\}$.

A weighted graph $(\Gamma, a)$  has \it polynomial volume growth with exponent
$\alpha>0$  \rm if there is a constant $c\geq 1$ such that for all $x\in\Gamma$
and for all $R\geq 1$, 
\begin{equation} \label{Va}\tag{{\bf $V_\alpha$}}  
 c^{-1} R^\alpha\leq  V(x,R)\leq c R^\alpha.
\end{equation}

A weighted graph $(\Gamma, a)$  has \it polynomial mean exit time with exponent
$\beta>0$  \rm  if there is a constant $c\geq 1$ such that for all $x\in\Gamma$
and for all $R\geq 1$,
\begin{equation} \label{Eb}\tag{{\bf $E_\beta$}}  
c^{-1} R^\beta\leq  E(x,R)\leq c R^\beta.
\end{equation}

As noticed in \cite[Theorem 3.1]{Bar04b}, 
by  \cite[Theorem 5.7 and Theorem 6.1]{GrTe02},
if a weighted graph verifies  (\ref{Va}) and  (\ref{Eb}) for
$\alpha > \beta \geq 2$ and the elliptic Harnack inequality  {\bf H(}$C${\bf )}
then it is transient. Hence we obtain the following corollary of
theorem I.

\begin{cor}
Let $(\Gamma, a)$ be a weighted graph verifying (\ref{Va}) and  (\ref{Eb}) for
$\alpha > \beta \geq 2$ and the elliptic Harnack inequality  {\bf H(}$C${\bf )}.
Then for all finite subsets $A\subset\Gamma$ and $y\in A$
the limit (\ref{harmlimit}) exists.
\end{cor}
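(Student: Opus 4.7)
The plan is simply to verify the three hypotheses of Theorem I and then invoke it. We must check that $(\Gamma, a)$ is connected, transient, and satisfies the weak Harnack inequality \textbf{wH(}$C'${\bf )} for some constant $C'\geq 1$.

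Connectedness is built into the standing assumptions on weighted graphs made in the introduction, and is implicit in the elliptic Harnack inequality hypothesis. Transience is the nontrivial analytic input and is exactly the content of the cited combination \cite[Theorem 5.7 and Theorem 6.1]{GrTe02}, summarized in \cite[Theorem 3.1]{Bar04b}: under $(V_\alpha)$ and $(E_\beta)$ with $\alpha>\beta\geq 2$, together with the elliptic Harnack inequality \textbf{H(}$C${\bf )}, the random walk on $(\Gamma, a)$ is transient. I would just quote this result without reproducing its proof.

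The last step, \textbf{wH(}$C'${\bf )}, is essentially free. The elliptic Harnack inequality \textbf{H(}$C${\bf )} asserts that there is a constant $C\geq 1$ such that for every $x\in\Gamma$ and every $R\geq 1$, every positive harmonic function $u$ on $B(x,2R)$ satisfies $\max_{B(x,R)} u \leq C\min_{B(x,R)} u$. Choosing $R'(x,R):=2R$ (independent of $x$) in Definition \ref{wHarineq} therefore gives \textbf{wH(}$C${\bf )} immediately with the same constant.

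With the three hypotheses verified, Theorem I applies and yields, for every finite subset $A\subset\Gamma$ and every $y\in A$, the existence of the limit \eqref{harmlimit}, together with its explicit capacity representation. There is no real obstacle in this corollary; the only conceptual point is the observation that the classical elliptic Harnack inequality is strictly stronger than the version \textbf{wH(}$C${\bf )} used in Theorem I, so the heavy lifting is done by the transience statement borrowed from \cite{GrTe02,Bar04b} and by Theorem I itself.
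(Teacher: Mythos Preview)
Your proposal is correct and matches the paper's approach exactly: the paper simply observes (in the paragraph preceding the corollary) that transience follows from \cite[Theorem 5.7 and Theorem 6.1]{GrTe02} and \cite[Theorem 3.1]{Bar04b}, and then invokes Theorem~I. One trivial remark: in the paper's Definition~\ref{Harineq} the shrinking parameter is an arbitrary $M>1$, not necessarily $2$, so you should set $R'(x,R):=MR$ rather than $2R$; this changes nothing of substance.
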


The harmonic measure from infinity also exists for
random walks in random environment and in particular for
the random walk on the supercritical percolation cluster.
 Before stating this result, we give a brief
description of the percolation model. See \cite{Kes82} for more details.

Consider the  lattice $\Z^d$, $d\geq 2$, where $x\sim y$ if $\vert
x-y\vert_1=1$ where $\vert\cdot\vert_1$ is the $\ell_1$-distance.
Denote the set of edges by $\mE^d$.

 Assume that  $(a(e) ; e\in \mE^d)$ are i.i.d. non-negative
random variables on a probability space $(\Omega, \P)$.
Call a bond $e$ open if $a(e) >0 $ and closed if $a(e) =0$.
Let $p=\P(a(e)>0)$. 
By percolation theory,  there exists a critical value 
$p_c=p_c(\Z^d)\in ]0;1[$ such that for $p<p_c$, $\P$ almost surely, 
 all open clusters of $\omega$ are finite and for $p>p_c,$ $\P$ almost
 surely,  there is a unique infinite  cluster of open edges which is
 called the supercritical cluster. It will be denoted
 by $\C_\infty = \C_\infty(\omega)$. The edges of this graph are the open edges of the
 cluster and the endpoints of these edges are the vertices of the
 graph. 

For $x,y\in \C_\infty(\omega)$, we will write 
 $x\sim y$ if the edge with endpoints $x$ and $y$ is open.
The transition probabilities of the random walk on $\C_\infty(\omega)$ are given by 
 (\ref{defprobtrans}). The law of the paths starting at $x\in
 \C_\infty(\omega)$ will be denoted by $P_x^{\omega}$.
The random walk on the supercritical
 percolation cluster corresponds to the case of Bernoulli random
 variables. In this case, we will write $\P_p$ instead of $\P$.

$D_\omega(x,y)$ will denote the graph distance between $x$ and $y$ in the graph $\C_\infty(\o)$ and the
  ball centered at $x\in\C_\infty(\o)$ of radius $R$ will be denoted by
$B_\omega(x,R) = \{ y\in \C_\infty(\o); \ D_\omega(x,y) < R \}$.\\

The existence of the harmonic measure for $\Z^d$, $d\geq 3$, with
i.i.d. conductances,  is given in corollary \ref{HDperco} below. 
It follows from  the Green function estimates of \cite[Theorem 1.2a]{ABDH10}.
A weaker condition which might hold even if the conductances are not
i.i.d. is given in  \cite[Theorem 6.1]{BaDe10}.

\begin{cor}\label{HDperco}
Let $(\Z^d, a)$, $d\geq 3$, be a weighted graph where the weights
  $(a(e) ; e\in \mE^d)$ are i.i.d. non-negative 
random variables on a probability space $(\Omega, \P)$
which verify 
  $$\P(a(e)>0) > p_c(\Z^d).$$

Then there exist constants $C_i$,  $C_s$, which depend on $\P$ and $d$,
 and $\Omega_1 \subset \Omega$ with $\P(\Omega_1)=1$ such that 
for each $\omega \in \Omega_1$, {\bf (\ref{GEgamma}) }  holds in $ \C_\infty(\omega)$
with the constants $C_i$ and $C_s$ and with $\gamma = d-2$.

For any finite subset $A$   of   $\C_\infty$ and for all $y\in A$,
the limit (\ref{harmlimit}) exists.

Moreover, we have:
$$ \lim_{{ |x|  \rightarrow + \infty}, {x\in \C_\infty}}
\overline{H}_A(x,y) = \lim_{m \rightarrow + \infty } H_A^m(y),$$
where $H_A^m(y)=  \frac{ \pi(y) P_y^{\omega} ( \tau_A > \tau_{\partial
    B_{\omega} (x_0,m) }) }
{     {\mathop{\rm Cap}}_m(A) }  \  $ for some $x_0\in \C_\infty$ and for $ m$ large enough.
\end{cor}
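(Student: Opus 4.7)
The plan is to deduce the corollary as a direct combination of Theorem I, Proposition \ref{tintin}, and the Green function estimates of \cite[Theorem 1.2a]{ABDH10}. The strategy is to reduce the statement to verifying the Green function bound (\ref{GEgamma}) on the infinite cluster $\C_\infty(\omega)$ for $\P$-almost every environment $\omega$, with $\gamma = d-2$, using deterministic constants $C_i, C_s$ that depend only on the law $\P$ and the dimension $d$.

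First, I would invoke \cite[Theorem 1.2a]{ABDH10}, which provides two-sided Gaussian-type Green function estimates for the random walk on $\C_\infty(\omega)$ valid on the event of full $\P$-measure, with deterministic constants and with a random scale above which the estimates hold at each site $z$, determined by a random radius $R_z(\omega)<\infty$. Rewriting those bounds in the form $c\, D_\omega(x,y)^{-(d-2)} \leq G^\omega(x,y)\leq c'\, D_\omega(x,y)^{-(d-2)}$ whenever $D_\omega(x,y)\geq R_x\wedge R_y$ is the content of (\ref{GEgamma}) with $\gamma=d-2$. Here one uses the well-known fact that on the supercritical cluster the chemical distance $D_\omega$ is comparable to $|\cdot|_1$ at large scales, so the bounds of \cite{ABDH10} (stated in terms of Euclidean distance) translate into the graph-distance form required by our definition. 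This yields the first assertion of the corollary, namely that (\ref{GEgamma}) holds on $\Omega_1$ with $\P(\Omega_1)=1$.

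Second, for each fixed $\omega\in\Omega_1$, the weighted graph $(\C_\infty(\omega),a)$ satisfies (\ref{GEgamma}) with $\gamma=d-2>0$, so Proposition~\ref{tintin} applies and gives that $(\C_\infty(\omega),a)$ is connected, transient, and satisfies the weak Harnack inequality $\mathbf{wH}(C)$ with $C=2^{d-2}C_s/C_i$, a constant that does not depend on $\omega$. Theorem~I then yields, for any finite subset $A\subset \C_\infty(\omega)$ and any $y\in A$, the existence of the limit $\lim_{D_\omega(x,A)\to\infty}\overline{H}_A(x,y)$ together with its capacity representation through $H_A^m(y)$, with base point $x_0$ chosen arbitrarily in $\C_\infty(\omega)$. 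Independence of the limit from $x_0$ is inherited from the corresponding statement in Theorem~I.

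The only substantive step is the replacement of the graph-distance condition $D_\omega(x,A)\to\infty$ by the Euclidean condition $|x|\to\infty$ with $x\in \C_\infty$. This is where I expect the main (mild) obstacle: one must argue that as $x\in\C_\infty(\omega)$ escapes to infinity in the $|\cdot|$-sense, $D_\omega(x,A)\to\infty$ as well. This follows from Antal--Pisztora's comparison between chemical and Euclidean distance on the supercritical cluster (or, more elementarily, from local finiteness of $\C_\infty$, which forces $D_\omega(x,A)\to\infty$ whenever $|x|\to\infty$ and $A$ is finite). Once this is in hand, the Euclidean formulation in the statement follows immediately from the graph-distance formulation supplied by Theorem~I, completing the proof.
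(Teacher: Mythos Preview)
Your proposal is correct and follows essentially the same approach as the paper: invoke the Green function estimates of \cite[Theorem~1.2a]{ABDH10} to verify (\ref{GEgamma}) with $\gamma=d-2$, then apply Proposition~\ref{tintin} and Theorem~I. Your version is in fact more careful than the paper's terse argument, since you explicitly address the translation between Euclidean and chemical distance via Antal--Pisztora and the passage from $D_\omega(x,A)\to\infty$ to $|x|\to\infty$, points the paper leaves implicit.
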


In \cite{ABDH10}, both the  constant speed random walk and
the variable speed random walk are considered. 
From the expression of their generators one immediately sees
that they have the same harmonic functions as the discrete time random
walk considered here.
 Moreover, since they are a time change of each other, the Green
 function
is the same.
Hence, by \cite [Theorem 1.2 a]{ABDH10}
the  Green function of a uniformly elliptic graph $(\Z^d,a)$,
$d\geq 3$, verifies the estimates {\bf (\ref{GEgamma}) }  with $\gamma =
d-2$.
The existence of the harmonic measure then follows from proposition
\ref{tintin} and Theorem I.

The harmonic mesure from infinity also exists for recurrent graphs.
The main result here is the existence of the harmonic measure
for all finite subsets of two-dimensional supercritical percolation clusters.

\bf Theorem II.\it\ \ 
Let $(\Z^2, a)$  be a weighted graph where the weights
  $(a(e) ; e\in \mE^2)$ are i.i.d.
random variables on a probability space $(\Omega, {\P}_p)$
which verify 
  $$p={\P}_p(a(e)=1)=1-{\P}_p(a(e)=0) > p_c(\Z^2).$$
Then ${\P}_p$ almost surely,
for any finite subset $A$   of   $\C_\infty(\o)$ and for all $y\in A$,
the limit (\ref{harmlimit}) exists.

An expression for the value of the limit  (\ref{harmlimit}) is given
in equation  (\ref{ident}).
\rm\bigskip

\begin{thm}\label{HDunif2}
If  $(\Z^2,a)$  is a uniformly elliptic weighted graph
then for all finite subsets $A\subset {\mathbb Z}^2$
and  for all $y\in A$,  the limit (\ref{harmlimit}) exists.
\end{thm}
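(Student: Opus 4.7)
The plan is to adapt the classical argument of Lawler for the simple random walk on $\Z^2$ via the recurrent potential kernel, replacing explicit Fourier asymptotics by Delmotte's parabolic Harnack inequality (PHI) and two-sided Gaussian heat-kernel bounds, both of which are available for any uniformly elliptic weighted graph on $\Z^2$. Theorem~I does not apply since the walk is recurrent, so the Green function has to be replaced by the potential kernel
\[
\mathsf{a}(x,y) := \sum_{k=0}^{\infty}\Bigl(\frac{p_k(y,y)}{\pi(y)}-\frac{p_k(x,y)}{\pi(y)}\Bigr).
\]

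First I would establish that $\mathsf{a}$ is well defined with two-sided logarithmic asymptotics. Delmotte's bounds give $p_k(y,y)/\pi(y)\asymp 1/(k+1)$ plus near-diagonal H\"older continuity. Splitting the sum at $k\asymp D(x,y)^2$, the Gaussian upper bound on $p_k(x,y)/\pi(y)$ controls the range $k\lesssim D(x,y)^2$ and produces a contribution of order $\log D(x,y)$; on $k\gtrsim D(x,y)^2$ PHI makes $p_k(x,y)/\pi(y)$ comparable to $p_k(y,y)/\pi(y)$, so the differences decay fast enough to be summable. The output is a bound
\[
c_1\log(1+D(x,y))\le \mathsf{a}(x,y)\le c_2\log(1+D(x,y))
\]
with constants depending only on the ellipticity ratio.

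Next I would check that $\mathsf{a}(\cdot,y)$ is harmonic on $\Z^2\setminus\{y\}$ and that $\mathcal{L}\mathsf{a}(\cdot,y)(y)=-1/\pi(y)$, so that $\mathsf{a}$ plays the role of a recurrent Green function. For a finite $A\subset\Z^2$ and $x\notin A$, apply optional stopping at $\tau_A$ to $n\mapsto \mathsf{a}(X_n,z)$ for each $z\in A$; uniform integrability follows from the logarithmic growth of Step~1 combined with recurrence. This yields the linear system
\[
\mathsf{a}(x,z)=\sum_{w\in A}P_x(X_{\tau_A}=w)\,\mathsf{a}(w,z),\qquad z\in A,
\]
which expresses the vector $(P_x(X_{\tau_A}=w))_{w\in A}$ as the solution of a fixed $|A|\times|A|$ system driven by the values $\mathsf{a}(x,z)$, $z\in A$. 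By Step~1 the differences $\mathsf{a}(x,z)-\mathsf{a}(x,z')$ are uniformly bounded in $z,z'\in A$ as $D(x,A)\to\infty$, and converge by Harnack-type tightness applied to the function $z\mapsto \mathsf{a}(x,z)$ on the asymptotically constant scale. Cancelling the common leading log in numerator and denominator of the solution of the linear system then produces a finite limit, which is the existence of the harmonic measure. The resulting expression can be rewritten using the reversibility identity $\mathop{\rm Cap}_m(A)=\sum_{z\in A}\pi(z)P_z(\tau_A>\tau_{\partial B(x_0,m)})$ to match the form appearing in Theorem~I and equation~(\ref{ident}) of Theorem~II.

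The main obstacle is the construction of $\mathsf{a}$ with sharp two-sided logarithmic bounds purely from PHI and Gaussian estimates: this is the recurrent analog of the Green function estimate~(\ref{GEgamma}) used in Theorem~I, and plays the role that the estimates of \cite{Bar04,BaHa09} play in Theorem~II. Once $\mathsf{a}$ and its asymptotics are in hand, the remainder of the argument transcribes the classical $\Z^2$ potential-kernel argument of \cite[chap.~2]{Law96}, with PHI and the elliptic Harnack inequality replacing the explicit Fourier computations of the simple random walk case.
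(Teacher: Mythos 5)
The paper does not spell out a separate proof of Theorem \ref{HDunif2}: it simply states at the start of Section \ref{recurrent} that the argument is the one given for Theorem II, with Barlow's and Barlow--Hambly's estimates replaced by Delmotte's parabolic Harnack inequality and Gaussian bounds for uniformly elliptic graphs. Your proposal starts from the same ingredients (potential kernel, PHI, H\"older continuity) and is in that respect aligned with the paper, but the route you take to the limit is genuinely different and, as written, has a gap. The paper's Theorem II argument proves, via Lemma \ref{lemma2d}, that $H_{A\cup\partial\widetilde B_n}(y,z)=P_y(\tau_A>\widetilde\sigma_n)\,H_{\partial\widetilde B_n}(x_0,z)\bigl[1+O(\ln n/n^{\nu'})\bigr]$ on level sets $\widetilde B_n$ of the potential kernel, and then feeds this into a last-exit decomposition combined with reversibility so that the large, $x$-dependent factor $\sum_z G_{A^c}(z,x)H_{\partial\widetilde B_n}(x_0,z)$ cancels in the ratio and one lands directly on (\ref{ident}). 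No linear system is ever solved, and one never needs anything beyond the two-sided bounds $g(x,x_0)\asymp\ln D(x_0,x)$; the annulus Harnack inequality and the capacity estimate $\capa_{B(x_0,n)}(\{x_0\})\asymp 1/\ln n$ do the quantitative work.

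Your proposal instead goes through the linear system $\mathsf{a}(x,z)=\sum_{w\in A}H_A(x,w)\,\mathsf{a}(w,z)$ and claims that ``cancelling the common leading log'' produces the limit. Two things need to be repaired here. First, the two-sided bound $c_1\log D\le\mathsf{a}\le c_2\log D$ with $c_1<c_2$ does \emph{not} by itself imply that $\mathsf{a}(x,z)-\mathsf{a}(x,z')$ stays bounded, let alone converges; what saves you is H\"older continuity of $z\mapsto\mathsf{a}(x,z)$ applied at scale $R\asymp D(x,A)$, which gives an oscillation bound of order $D(x,A)^{-\nu}\log D(x,A)\to 0$ --- this is the analogue of (\ref{firsteq}) and needs to be said explicitly, because this is precisely where the argument has real content. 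Second, and more seriously, the limiting system $\sum_w h(w)=1$, $\sum_w h(w)\,[\mathsf{a}(w,z)-\mathsf{a}(w,z_0)]=0$ for $z\in A$ has to be shown to admit a \emph{unique} probability solution for the ``cancellation'' to identify the limit; the matrix $(\mathsf{a}(w,z))_{w,z\in A}$ has zero diagonal and, because $p_k(y,y)/\pi(y)$ depends on $y$ in the weighted case, is not symmetric, so the invertibility argument Lawler uses for the simple random walk does not transfer automatically. This non-degeneracy is exactly what the paper's reversibility trick is designed to sidestep: by writing $H_A(x,y)$ as the normalized quantity $\pi(y)P_y(\widetilde\sigma_n<\tau_A)\big/\sum_{y'}\pi(y')P_{y'}(\widetilde\sigma_n<\tau_A)$ up to a vanishing multiplicative error, no linear algebra over $(\mathsf{a}(w,z))$ is ever needed. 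If you want to keep the linear-system route, you must supply the determinant non-vanishing argument; otherwise, adapting the last-exit--plus--reversibility mechanism from the Theorem II proof is the cleaner path and is what the paper intends.
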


\begin{rem}
Note that on a regular tree, the harmonic mesure from infinity does
not exist for any set $A$ which contains at least two vertices.
It would be interesting to investigate the links between the
Poisson boundary 
 of a graph and the existence of the harmonic measures.
In particular, the triviality of the Poisson boundary
does not imply the existence of the harmonic measure
as is shown by the lamplighter group $\Z^2\wr \Z/2\Z$.
See \cite{Sav10} and the references therein.
\end{rem}

Various forms of Harnack inequality that will be used both for
transient
graphs and for recurrent graphs are gathered in section \ref{HarIneq}.
The proof of theorem I
is given in section \ref{transient} while Theorem II
is proved in section \ref{recurrent}. 
The last section contains the proof of the annulus Harnack inequality
that is used in the proof of Theorem II.

\section{Harnack inequalities}\label{HarIneq}
\cleqn

We start by recalling a classical form of the Harnack inequality on a graph.
Then we give related inequalities and weaker versions.

\begin{defn}\label{Harineq}
We say that a weighted graph $(\Gamma, a)$ satisfies {\bf H(}$C${\bf )}, 
  the Harnack inequality with shrinking parameter  $M> 1$, 
if  there is a constant $C <\infty $ such that
for all $x\in \Gamma$ and $R>0$, and
for any positive harmonic function $u$ on $B(x, MR)$,
$$\max_{B(x,R)} u \leq C \min_{B(x,R)} u.$$
\end{defn}

In our context, we will use the weak form of Harnack inequality
given in definition \ref{wHarineq}.  We rewrite this definition
under a form similar to definition \ref{Harineq}. The proofs
will be given with these notations.

\begin{defn}
We say that a weighted graph $(\Gamma, a)$
satisfies {\bf wH(}$C${\bf )},
 the weak Harnack inequality, if there is a constant $C>0$ such that
  for all $x\in \Gamma$ and for all $R>0$ there is $M_{x,R}\geq 2$
such that for all $M>M_{x,R} $ and for any positive harmonic
function $u$ on $B(x, MR)$,
$$\max_{B(x,R)} u \leq C \min_{B(x,R)} u.$$
\end{defn}

Barlow \cite[Theorem 3]{Bar04} showed that the supercritical percolation
cluster verifies another form of Harnack inequality. 
However,  by corollary
\ref{HDperco} and proposition \ref{tintin} below,  the random walk
on the supercritical percolation
cluster also verifies  {\bf wH(}$C${\bf )}. Given below is a
Harnack inequality  under the form that will be most useful to us.
It is an immediate consequence of Theorem 5.11,
proposition 6.11 and of (0.5) of Barlow's work \cite{Bar04}.

\bf Harnack Inequality for the percolation cluster\ \rm \cite{Bar04}.\it\  
Let $d\geq 2$ and let $p>p_c(\Z^d)$.
There exists $c_1=c_1(p,d)$ and $\Omega_1 \subset\Omega$
with $\P_p(\Omega_1)=1$, and $R_0(x, \omega)$ such that 
$3\leq R_0(x,\omega)<\infty $ for each
$\omega \in \Omega_1, \ x\in \C_\infty(\omega).$

If $R\geq R_0(x,\omega)$ and if $D(x,z)\leq \frac{1}{3}R\ln R$
and if $u : \overline{B(z,R)}\to \R$ is positive and harmonic
in $B(z,R)$, then
\begin{eqnarray}
  \label{HarnackBarlow1}
  \max_{B(z,R/2)} u \leq c_1 \min_{B(z,R/2)} u.
\end{eqnarray}

Moreover, there are positive constants $c_2, c_3$ and $\eps$ which
depend on $p$ and $d$ such that the tail of $R_0(x, \omega)$ satisfies
\begin{eqnarray}
  \label{HarnackBarlow2}
 \P_p(x\in\C_{\infty}, R_0(x, \cdot ) \geq n)\leq c_2\exp(-c_3 n^{\eps}).
\end{eqnarray}
\rm


In the proof of Theorem I, we will need the H\"older
continuity of harmonic functions. It is a consequence of the weak Harnack inequality. Property {\bf wH(}$C${\bf )} leads  to the following lemma.

\begin{lem}[weak Hölder continuity]\label{holder}
Let $(\Gamma, a)$ be a weighted graph which verifies {\bf{wH}}(C)
with shrinking parameters $(M_{x,R}; x\in\Gamma, R>0)$
where $M_{x,R}\geq 2$ for all $x\in \Gamma$ and $R>0$.

  Then there exist $\nu >0, \ c>0$  such that for all $x_0\in
  \Gamma$, $R>0$, $M\geq M_{x_0,R}$ 
and for any positive harmonic function $u$  on $B(x_0,MR)$ and $x \in B(x_0,R)$, 
$$  |u(x)-u(x_0)| \leq     c \  (\frac{D(x_0,x)}{R}  )^{\nu}\max_{B(x_0,MR)} u.$$
\end{lem}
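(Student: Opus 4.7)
\emph{Proof sketch.} The plan is to run the classical Moser--De Giorgi oscillation reduction argument: derive H\"older continuity from Harnack by iteration, with one wrinkle to accommodate the scale-dependence of the shrinking parameter in \textbf{wH(}$C$\textbf{)}.

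First I would prove a one-step oscillation contraction. Fix $r\le R$ and suppose $u$ is nonnegative harmonic on $B(x_0,M'r)$ for some $M'\ge M_{x_0,r}$; let $K\ge 2$ be a large integer to be chosen. Apply the weak Harnack inequality at scale $r/K$ to each of the nonnegative harmonic functions $v = u - \min_{B(x_0,r)} u$ and $w = \max_{B(x_0,r)} u - u$ (strict positivity is recovered by adding $\varepsilon$ and letting $\varepsilon\to 0$). The two inequalities $\max v\le C\min v$ and $\max w\le C\min w$ on $B(x_0,r/K)$ add to the standard contraction
\[ \osc_{B(x_0,r/K)} u \le \theta\,\osc_{B(x_0,r)} u, \qquad \theta = \frac{C-1}{C+1}<1. \]

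Next, iterate this on the scales $r_k = R/K^k$. To apply the weak Harnack inequality at scale $r_k$ requires $u$ harmonic on $B(x_0, M_{x_0,r_k} r_k)$; since by hypothesis $u$ is harmonic on $B(x_0,MR) = B(x_0,MK^k r_k)$, this holds as soon as $MK^k\ge M_{x_0,r_k}$. I would enlarge the shrinking parameter in the statement by redefining $M_{x_0,R}$ to dominate $\sup_{k\ge 1} K^{-k}M_{x_0,R/K^k}$ over the finitely many scales used in the iteration; this enlargement is free since \textbf{wH(}$C$\textbf{)} delivers the same constant $C$ for every $M'\ge M_{x_0,r}$. Iterating then yields
\[ \osc_{B(x_0,r_k)} u \le \theta^k \,\osc_{B(x_0,R)} u \le 2\theta^k\max_{B(x_0,MR)} u, \]
using $u\ge 0$ for the second inequality.

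Finally, given $x\in B(x_0,R)$ I would choose $n$ with $r_{n+1}<D(x_0,x)\le r_n$, so that $x\in B(x_0,r_n)$ and
\[ |u(x)-u(x_0)|\le \osc_{B(x_0,r_n)} u \le c \bigl(D(x_0,x)/R\bigr)^{\nu}\max_{B(x_0,MR)} u, \]
with $\nu = \log(1/\theta)/\log K>0$. The main obstacle is exactly the scale-dependence of $M_{x_0,r}$: blindly iterating as in the classical uniform-Harnack setting would require control on these thresholds across all scales, and this is resolved here by tuning the lemma's shrinking parameter $M_{x_0,R}$ to absorb the smaller-scale thresholds that appear in the iteration.
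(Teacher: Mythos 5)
You take the same Moser--De Giorgi oscillation route as the paper, so the overall strategy is in the right direction. But the fix you propose for the scale dependence of the shrinking parameter targets the wrong hypothesis. To invoke \textbf{wH(}$C$\textbf{)} at scale $r_k$ for $v_k = u-\min_{B(x_0,r_{k-1})} u$ (or $w_k = \max_{B(x_0,r_{k-1})} u - u$), you need $v_k$ to be \emph{positive} harmonic on $B(x_0, M_{x_0,r_k}\,r_k)$, not merely $u$ harmonic there. Harmonicity is free, since $v_k$ and $w_k$ differ from $u$ by constants and $u$ is harmonic on the much larger $B(x_0,MR)$. Positivity is the real constraint: $v_k, w_k\ge 0$ only on $B(x_0, r_{k-1}) = B(x_0, K r_k)$, so one needs $M_{x_0, r_k}\le K$ for every $k$ in the iteration, i.e.\ a uniform \emph{upper} bound on the shrinking parameters at the smaller scales. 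Nothing in \textbf{wH(}$C$\textbf{)} provides such a bound (it only says $M_{x,R}\ge 2$), and no enlargement of the single top-level quantity $M_{x_0,R}$ --- which is all your redefinition touches --- can enforce an upper bound on the $M_{x_0,r_k}$ for $k\ge 1$. So, as written, your sketch does not justify the one-step oscillation contraction.

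The paper handles the scale dependence by a different device. It does not re-apply \textbf{wH(}$C$\textbf{)} at the shrinking scales at all: its opening observation is that one application at the top scale $R$ already yields the Harnack inequality with the same constant $C$ at every $R'\le R$, for a function positive harmonic on the fixed ball $B(x_0,MR)$, simply because $\max_{B(x_0,R')} u \le \max_{B(x_0,R)} u \le C\min_{B(x_0,R)} u \le C\min_{B(x_0,R')} u$. The intermediate-scale parameters $M_{x_0,r_k}$ therefore never appear. That is the step your proposal is missing; replacing your redefinition of $M_{x_0,R}$ by this observation would bring your argument into line with the paper's.
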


\begin{proof} Let $x_0\in \Gamma$ and $R>0$. Then for all 
$M\geq M_{x_0,R} $ and $R'\leq R$, if  $u$ is a positive harmonic
function on $B(x_0,MR)$ then
$$\max_{B(x_0,R')} u \leq \max_{B(x_0, R)} u \leq C\min_{B(x_0,R)} u
\leq C\min_{B(x_0,R')} u.
$$
Let  $$V(i):=\max_{B(x_0,2^i)} u - \min_{B(x_0,2^i)} u.$$ 

 Then for  $2^i\leq R$,  the functions
$ u-\min_{B(x_0,2^{i+1})}u $ and $\max_{B(x_0,2^{i+1})} u -u$   are
harmonic in $B(x_0,MR)$. 
Then by the  weak Harnack inequality on $B(x_0,2^i)$,
  $$ V(i)+V(i+1) \leq C [V(i+1)-V(i)].$$
And so, we deduce  that there exists $\lambda<1$ such that
$$V(i) \leq \lambda \  V(i+1).$$

For any $x\in B(x_0,R)$, we can find $N_1$ such  that
 $2^{N_1-1} \leq D(x_0,x) \leq 2^{N_1}$.  
 Then $$  |u(x)-u(x_0)| \leq V(N_1).$$ 
 
Let $N_2$ be such that 
 $2^{N_2} \leq R < 2^{N_2+1}$.
Then, since $2^{N_2+1}\leq M R$,
 $$ V(N_1)\leq \lambda^{N_2-N_1+1} \ V(N_2+1)$$
and in particular, 
$$|u(x)-u(x_0)| \leq   c  \  \left( \frac{D(x_0,x)}{R}  \right)
^{\nu}\max_{B(x_0,MR)} u$$
where  $\nu>0$ solves $\lambda^{-1} = 2^\nu$ and $c>0$ is a constant.
 \end{proof}

Similarly, from Harnack inequality for  the supercritical
cluster (\ref{HarnackBarlow1}), 
we have the following H\"older continuity property.

\begin{prop}\label{holderperco}
Let $d\geq 2$ and let $p>p_c(\Z^d)$.
Let $\Omega_1$ and $R_0(x,\omega)$ be given
by the  Harnack inequality for  the supercritical
cluster.
Then there exist positive constants $\nu$ and  $c$ such that for each
$\omega \in \Omega_1, \ x_0\in \C_\infty(\omega)$
if $R\geq R_0(x_0,\omega) $ and 
$u$
is a positive harmonic function on $B_{\omega} (x_0, R)$ then,
for all $x,y \in B_{\omega}(x_0,R/2)$, 
$$|u(x)-u(y)| \leq c \  \left(
  \frac{D(x,y)}{R}\right)^{\nu}\max_{B(x_0, R)} u.$$
\end{prop}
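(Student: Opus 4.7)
The plan is to mimic the iteration proof of Lemma \ref{holder} using the elliptic Harnack inequality (\ref{HarnackBarlow1}) for the supercritical cluster in place of the weak Harnack inequality. Fix $\omega\in\Omega_1$, $x_0\in\C_\infty(\omega)$ with $R\geq R_0(x_0,\omega)$, and $u$ positive harmonic on $B_\omega(x_0,R)$. Given $x,y\in B_\omega(x_0,R/2)$, let $r:=D_\omega(x,y)$; the conclusion is trivial when $r\geq R/4$ (upon enlarging $c$), so I assume $r<R/4$.

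I would run the iteration on balls centered at $x$ with dyadic radii $\rho_k:=r\cdot 2^k$, $k=0,1,\dots,K$, where $K$ is the largest integer with $\rho_K\leq R/4$. Then $y\in B_\omega(x,\rho_0)$ and $B_\omega(x,\rho_K)\subset B_\omega(x_0,R)$. Put
$$V(k):=\max_{B_\omega(x,\rho_k)}u-\min_{B_\omega(x,\rho_k)}u.$$
Applying (\ref{HarnackBarlow1}) to each of the nonnegative harmonic functions $u-\min_{B_\omega(x,\rho_{k+1})}u$ and $\max_{B_\omega(x,\rho_{k+1})}u-u$ on $B_\omega(x,\rho_{k+1})$ (whose Harnack-shrunk version is $B_\omega(x,\rho_{k+1}/2)=B_\omega(x,\rho_k)$), then summing and rearranging exactly as in the proof of Lemma \ref{holder}, yields
$$V(k)+V(k+1)\leq c_1\bigl(V(k+1)-V(k)\bigr),$$
whence $V(k)\leq\lambda\,V(k+1)$ with $\lambda:=(c_1-1)/(c_1+1)<1$. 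Iterating from $k=0$ to $k=K\asymp\log_2(R/r)$ and bounding $V(K)\leq\max_{B_\omega(x_0,R)}u$ gives $V(0)\leq c(r/R)^{\nu}\max_{B_\omega(x_0,R)}u$ with $\nu:=-\log_2\lambda>0$, and then $|u(x)-u(y)|\leq V(0)$ since $y\in B_\omega(x,\rho_0)$.

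The main obstacle is verifying the hypotheses of (\ref{HarnackBarlow1}) at every scale $\rho_{k+1}$ of the iteration. Each application requires a reference point $x^*$ with $\rho_{k+1}\geq R_0(x^*,\omega)$ and $D_\omega(x^*,x)\leq\rho_{k+1}\ln\rho_{k+1}/3$. Choosing $x^*=x_0$ uses only the stated hypothesis $R\geq R_0(x_0,\omega)$ and works at the largest scales, but the bound $D_\omega(x_0,x)\leq R/2$ forces this choice to fail once $\rho_{k+1}\ln\rho_{k+1}<3R/2$. For the smaller scales I would pick $x^*\in B_\omega(x,\rho_{k+1})$ with $R_0(x^*,\omega)\leq\rho_{k+1}$; the existence of such an $x^*$ on a set of full $\P_p$-measure (possibly after further restricting $\Omega_1$) follows by a Borel--Cantelli argument from the stretched-exponential tail (\ref{HarnackBarlow2}), which guarantees many points with small $R_0$-value in every mesoscopic ball. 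Once this technical point is settled, the iteration produces deterministic constants $c$ and $\nu$ as in Lemma \ref{holder}.
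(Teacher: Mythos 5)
Your dyadic iteration and the oscillation-decay step $V(k)\le\lambda V(k+1)$ with $\lambda=(c_1-1)/(c_1+1)$ are carried out correctly, and this is exactly what the paper has in mind: there is no written proof of Proposition~\ref{holderperco}, only the phrase ``similarly'' pointing back to Lemma~\ref{holder}. You have also put your finger on the genuine difficulty that the paper glosses over. To invoke (\ref{HarnackBarlow1}) on $B_\omega(x,\rho_{k+1})$ one needs a reference vertex $x^*$ with $\rho_{k+1}\ge R_0(x^*,\omega)$ and $D_\omega(x^*,x)\le\tfrac13\rho_{k+1}\ln\rho_{k+1}$; the hypotheses control $R_0$ only at $x_0$, while $D_\omega(x_0,x)$ may be as large as $R/2$, so $x^*=x_0$ is admissible only for $\rho_{k+1}\gtrsim R/\ln R$, far too coarse a range to yield a power of $D(x,y)/R$.

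The Borel--Cantelli patch, however, does not close this gap. What you would need is a single full-measure event on which \emph{every} ball $B_\omega(z,\rho)$, with $z$ ranging over $B_\omega(x_0,R/2)$ and $\rho$ decreasing all the way down to $D(x,y)$ (which may equal $1$), contains a vertex $x^*$ with $R_0(x^*,\omega)\le\rho$. But (\ref{HarnackBarlow2}) is a fixed-vertex tail estimate; a union bound over the $\gtrsim R^d$ relevant centers and over dyadic scales gives probabilities of order $R^d\exp(-c_3\rho^\eps)$, which are summable only for $\rho\gtrsim(\ln R)^{1/\eps}$, and the resulting Borel--Cantelli threshold is $\omega$-dependent whereas the statement demands deterministic $c,\nu$. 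Thus the chain still stalls at a polylogarithmic scale, yielding at best $|u(x)-u(y)|\le c\bigl((D(x,y)\vee(\ln R)^{1/\eps})/R\bigr)^\nu\max_{B(x_0,R)}u$ on an $\omega$-dependent event. This is in fact a real imprecision in the statement itself: the parabolic oscillation bound the authors actually invoke in the proof of Lemma~\ref{recgreen}, namely \cite[Proposition 3.2]{BaHa09}, carries the factor $\rho(x_0,x)=R_0(x_0)\vee16\vee D(x_0,x)$ in place of $D(x_0,x)$, and the elliptic version should be read with the analogous $R_0$-dependent floor. The applications in Proposition~\ref{limlnuA} take $D(x,y)=1$ but can absorb a floor of the form $(\ln n)^K$ after a routine strengthening of the Borel--Cantelli step there; so your instinct about where the problem lies is correct, but neither the Borel--Cantelli patch nor the bare iteration establishes Proposition~\ref{holderperco} verbatim.
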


We will also need a Harnack inequality in the annulus
of the two-dimensional supercritical percolation cluster.
It follows from  results of Barlow
\cite{Bar04}, a percolation result due to Kesten 
\cite{Kes82} and the following estimates of Antal and Pisztora
\cite[Theorem 1.1 and Corollary 1.3]{AnPi96}. 

For $d\geq 2$ and $p>p_c(\Z^d)$,
there is a constant $\mu=\mu(p, d)\geq 1$  
such that 
\begin{equation} \label{AP1}
  \limsup_{\vert x\vert_1\to\infty}\frac{1}{\vert x\vert_1}\ln
  \P_p[x_0,x\in\C_\infty, D(x_0,x)>\mu \vert x\vert_1] < 0
\end{equation}
and, $\P_p$ almost surely,
  for $x_0\in\C_\infty$ and for all $x\in\C_\infty$
such that $D(x_0,x)$ is sufficiently large
\begin{equation} \label{AP2}
  D(x_0,x)\leq \mu \vert x-x_0\vert_1.
\end{equation}

\begin{prop}\label{annulusharnack1}
Let $p>p_c(\Z^2)$.
There is a constant $C>0$ such that ${\mathbb P}_p$-a.s.,
for all $x_0\in \C_{\infty} $ and $r>0$,  if $m$ is large enough,\\
 then for any positive function $u$ harmonic in
$\± B(x_0, 3\mu m)\setminus B(x_0, r )  $,
$$\max_{x ; D (x_0, x) =m} u(x)\leq C \min_{x ; D (x_0, x) =m} u(x)
$$
where $\mu$ is the constant   that appears in (\ref{AP2}).
\end{prop}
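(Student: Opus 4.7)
The plan is to reduce the annulus Harnack inequality to a uniformly bounded chain of Barlow's interior Harnack inequalities (\ref{HarnackBarlow1}), organized around an open circuit surrounding $x_0$ provided by the circuit theorem of Kesten \cite{Kes82}.

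I would first fix a small $\eta \in (0,1)$, to be chosen in terms of $\mu$. The starting observation is that if $z \in \C_\infty(\omega)$ satisfies $r + \eta m < D(x_0,z) < 3\mu m - \eta m$ and $m$ is large enough that $\eta m \geq R_0(x_0,\omega)$ and $D(x_0,z) \leq \tfrac{1}{3}(\eta m)\ln(\eta m)$, then (\ref{HarnackBarlow1}) applied to $B(z,\eta m)$ yields $\max_{B(z,\eta m/2)} u \leq c_1 \min_{B(z,\eta m/2)} u$, since $B(z,\eta m)$ is then contained in the annulus $\overline{B(x_0,3\mu m)}\setminus B(x_0,r)$ where $u$ is harmonic. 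Next, Kesten's circuit theorem and uniqueness of the infinite cluster produce $\P_p$-a.s., for all $m$ large enough, an open circuit $\Gamma_m \subset \C_\infty(\omega)$ surrounding $x_0$ in the Euclidean annulus $\{ \vert z-x_0\vert_\infty \in [m,\, 5m/4]\}$; by (\ref{AP2}) its points lie at chemical distance between $m$ and $\tfrac{5\mu}{2}m$ from $x_0$, and its chemical diameter is $O(m)$. For $\eta < \mu/2$ the circuit lies in the admissible chemical annulus and is covered by $N_1 = O(1/\eta)$ chained overlapping balls $B(w_j,\eta m/2)$ with $w_j \in \Gamma_m$.

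Given any $z$ on the chemical sphere $\{D(x_0,\cdot) = m\}$, I would pick $z^* \in \Gamma_m$ on the outward ray from $x_0$ through $z$, and connect $z$ to $z^*$ by an open path in $\C_\infty(\omega)$ shadowing the Euclidean segment $\overline{zz^*}$ at $\ell_\infty$-distance $O(\log m)$, as provided by the renormalization machinery behind (\ref{AP1})--(\ref{AP2}). This path has chemical length $O(m)$, remains at Euclidean distance $\geq m/(2\mu) - O(\log m) \gg r$ from $x_0$ for $m$ large, and can be covered by $N_2 = O(1/\eta)$ further chained balls of chemical radius $\eta m/2$ contained in the admissible annulus. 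Concatenating two such radial chains with an arc of the circuit chain between any two chemical-sphere points $z$ and $z'$ yields a chain of at most $2N_2 + N_1 = O(1)$ applications of (\ref{HarnackBarlow1}), hence $u(z) \leq c_1^{2N_2 + N_1} u(z')$ with $C := c_1^{2N_2 + N_1}$ depending only on $p$.

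The principal obstacle will be the radial routing: a naive chemical geodesic from $z$ to $z^*$ can shortcut through $B(x_0,r)$, because $D(x_0,z) + D(x_0,z^*) \approx 2m$ is of the same order as the Antal--Pisztora upper bound on $D(z,z^*)$. Exploiting the planar topology (the circuit supplies a radially outward target for every point of the chemical sphere) together with the ability to construct open paths in $\C_\infty(\omega)$ that follow Euclidean straight lines is what forces the entire chain to stay inside $\overline{B(x_0,3\mu m)}\setminus B(x_0,r)$ and produces a constant $C$ independent of $m$ and $r$.
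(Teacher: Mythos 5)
Your approach is genuinely different from the paper's. The paper covers the Euclidean annulus $\{\,4m/(5\mu)\le \vert x-x_0\vert_1\le 2m\,\}$ by a fixed, $m$-independent grid of $O(1)$ chemical balls of radius $\Theta(m)$ centered near grid points $x_0 + (im\delta_1/\mu; jm\delta_1/\mu)$; adjacent balls overlap by (\ref{AP2}), all of them clear $B(x_0,r)$, and each supports Barlow's Harnack inequality thanks to the $\tfrac{1}{3}R\ln R$ slack in \cite[Theorem 5.11]{Bar04} (the same slack you are using in (\ref{HarnackBarlow1})). Chaining then takes place entirely on the grid, so the question of how to route from the chemical sphere to a fixed target never arises. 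You instead build the chain out of a Kesten circuit plus a radial connection from each point of the chemical sphere to the circuit. This is closer to the classical planar-annulus argument, but it manufactures exactly the difficulty you correctly identify as the ``principal obstacle'' and which the paper's grid covering sidesteps entirely.

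That obstacle is a genuine gap, and your proposed resolution does not close it. You claim an open path from $z$ to $z^*$ ``shadowing the Euclidean segment $\overline{zz^*}$ at $\ell_\infty$-distance $O(\log m)$'' is ``provided by the renormalization machinery behind (\ref{AP1})--(\ref{AP2}).'' It isn't. Statements (\ref{AP1}) and (\ref{AP2}) are purely metric comparisons between $D_\omega$ and $\vert\cdot\vert_1$; they control the \emph{length} of a chemical geodesic but say nothing about where in $\Z^2$ it goes, and, as you yourself compute, a geodesic from the chemical sphere to $z^*$ may well pass through or near $B(x_0,r)$. Producing a path that provably hugs a prescribed Euclidean segment requires an additional percolation input in the spirit of Kesten's channel theorem \cite[Theorem 11.1]{Kes82} (used in this paper for the flow construction in Proposition \ref{capalog}), or a comparable block/slab argument; you invoke neither. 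Until that ingredient is supplied, the step ``can be covered by $N_2 = O(1/\eta)$ further chained balls\ldots contained in the admissible annulus'' is an assertion, not a proof, and the constant $C$ is not established. If you do import the channel theorem, your argument can be made to work, but it then becomes strictly more involved than the grid covering, which requires only (\ref{AP2}) to verify ball overlaps and clearance of $B(x_0,r)$.
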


Since we
need a construction that is done in section \ref{Kestensgrid},
the proof of this Harnack inequality   is postponed
to section \ref{proofAH1}.

\section{Proofs for transient graphs}\label{transient}
\cleqn

In this section, we prove proposition \ref{tintin} and Theorem I.

\begin{proof} [Proof of proposition \ref{tintin}.]

The key ingredient to prove  proposition \ref{tintin}
is given by Boukricha's lemma \cite{Bou79}. 
See also \cite[p. 37]{Tel06}.
Roughly speaking, this lemma ensures that a Harnack inequality holds for
general positive  harmonic functions as soon as a Harnack inequality
holds for the Green function in an annulus.

For $x\in\Gamma$ and   $R>0$, let 
\begin{equation}
M_{x,R} = 3 \vee \frac{1}{R}\max_{w\in B(x,R)}  R_w.\label{choixMxR}
\end{equation}

We claim that  {\bf wH(}$C${\bf )} holds with the shrinking parameters
$\io M_{x,R} $
and the constant $C = 2^{\gamma} \frac{C_s}{C_i  } $.

Fix $x_0\in\Gamma$, $R>0$, $M>M_{x_0, R}$ and
 let  $u$ be a positive harmonic function on $B(x_0, MR)$.

 First note that under {\bf (\ref{GEgamma})}, the graph is transient
and we can apply Boukricha's lemma 
(\cite{Bou79},  \cite[p. 37]{Tel06})
with $B_0=B(x_0,R), \ B_1=B(x_0,(M+1)R), \ 
B_2=B(x_0, (M+2)R)$ and $B_3=\Gamma$. So we get that if $u$ is harmonic
on $B_2$, then
$$   \max_{B_0} u \leq D \min_{B_0} u,$$ with
$$ D= \max_{x,y \in B_0} \max_{z\in \overline{B_2}\setminus B_1} \frac{G(x,z)}{G(y,z)}.$$
So, we have to compare $G(x,z)$ and $G(y,z)$ for $x,y\in B_0$
and $z \in B(x_0, (M+2) R)\setminus B(x_0, (M+1)R).$ 
For all $w\in B_0$, $D(w,z) > MR > R_w$ by (\ref{choixMxR}).
 Hence, by  {\bf (\ref{GEgamma})},
$$   \frac{C_i}{D(w,z)^{\gamma}}    \leq G(w,z) \leq    \frac{C_s}{D(w,z)^{\gamma}}. $$
Then, we successively  have :
\begin{eqnarray*}
G(x,z) &\leq &  \frac{C_s}{D(x,z)^{\gamma}}   \\
& = &  \frac{C_s}{C_i  }  \   \left(\frac{D(y,z)}{D(x,z)}\right)^{\gamma} 
\   \frac{C_i}{D(y,z)^{\gamma}}       \\ \\
 &\leq&     \frac{C_s}{C_i  }  \ \left( \frac{R+(M+2) R}{(M+1)R-R}\right)^{\gamma}\ G(y,z) \\
 &\leq&   \frac{C_s}{C_i  } \left(\frac{M+3}{M}\right)^{\gamma}\ G(y,z)\\
 &\leq & 2^{\gamma} \frac{C_s}{C_i  }  G(y,z).
 \end{eqnarray*}
\end{proof}

We can  now state  the main lemma to prove
Theorem I. 


\begin{lem} \label{lemma} 
Let $(\Gamma, a)$ be a weighted graph
which verifies {\bf wH(}$C${\bf)}. Fix $x_0\in \Gamma$.

Let $A$ be a finite subset of $\Gamma$.  Let $r_A>0 $ be such that $A\subset B(x_0,r_A)$.

For all $M> 2$, there is $\lambda_M >1$ such that for all $\lambda>
\lambda_M $ and for all  $ y\in A$ and $z\in \partial B(x_0,\lambda M r_A)$,
 \begin{equation}\label{maineq}
  P_y ( X_{\tau_A \wedge \tau_{\partial B}}=z | \tau_A >\tau_{\partial
    B}) 
= H_{\partial B}(x_0,z) [1+O \Big(M^{-\nu}\Big)],
\end{equation}
where $B=B(x_0,\lambda M r_A)$ and
$\nu>0$ is the Hölder exponent given by lemma \ref{holder}.
The constant in $O(\cdot)$ depends only on the constants $C$ and $c$
that appear  in   {\bf wH(}$C${\bf)} and in lemma \ref{holder} respectively.
\end{lem}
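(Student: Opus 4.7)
The plan is to apply optional stopping to the harmonic function
$$v(w) := H_{\partial B}(w,z) = P_w(X_{\tau_{\partial B}} = z),\qquad w\in B,$$
and to use lemma \ref{holder} together with {\bf wH(}$C${\bf)} to show that $v$ is multiplicatively close to $v(x_0) = H_{\partial B}(x_0,z)$ on all of $A$, with relative error $O(M^{-\nu})$; this value $v(x_0)$ is exactly the $H_{\partial B}(x_0,z)$ appearing on the right-hand side of (\ref{maineq}).

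First I would choose $\lambda_M$ large enough that at scale $R=Mr_A$ both lemma \ref{holder} and {\bf wH(}$C${\bf)} apply to $v$ on concentric balls still lying inside $B$. Lemma \ref{holder} then gives, for $w\in B(x_0,r_A)\supseteq A$,
$$|v(w)-v(x_0)| \;\leq\; c\bigl(D(w,x_0)/(Mr_A)\bigr)^{\nu}\,\max v \;\leq\; cM^{-\nu}\max v,$$
and the weak Harnack inequality on a slightly larger concentric ball bounds $\max v \leq Cv(x_0)$. Hence
$$v(w) = v(x_0)\bigl(1 + O(M^{-\nu})\bigr),\qquad w\in B(x_0,r_A).$$

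Fix $y\in A$, let $\tau := \tau_A\wedge\tau_{\partial B}$ and $g(y) := P_y(\tau_{\partial B}<\tau_A)$. Since $v$ is harmonic at $y$ (as $y\in A\subset B$) and agrees with $\mathbf{1}_{\cdot=z}$ on $\partial B$, optional stopping at $\tau$ yields
$$v(y) = E_y\bigl[v(X_{\tau_A});\,\tau_A<\tau_{\partial B}\bigr] + P_y\bigl(X_{\tau_{\partial B}}=z,\,\tau_{\partial B}<\tau_A\bigr).$$
Replacing $v$ by $v(x_0)(1+O(M^{-\nu}))$ at both $y$ and at $X_{\tau_A}\in A$,
$$P_y\bigl(X_{\tau_{\partial B}}=z,\,\tau_{\partial B}<\tau_A\bigr) = v(x_0)g(y) + O(M^{-\nu})v(x_0),$$
so dividing by $g(y)$,
$$P_y\bigl(X_{\tau_A\wedge\tau_{\partial B}}=z\mid\tau_A>\tau_{\partial B}\bigr) = v(x_0)\Bigl(1 + \frac{O(M^{-\nu})}{g(y)}\Bigr).$$

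The main obstacle is the last step: converting this into the claimed $v(x_0)(1+O(M^{-\nu}))$ requires $g(y) \geq c > 0$ uniformly in $y\in A$ once $\lambda\geq\lambda_M$. This is where transience of $(\Gamma,a)$ is used. Since the graph is transient and $A$ is finite, $\es_A(y) = P_y(\tau_A = \infty) > 0$ for each $y\in A$, and $g(y)\nearrow\es_A(y)$ monotonically as $B$ exhausts $\Gamma$. Enlarging $\lambda_M$ further if needed so that $g(y) \geq \tfrac{1}{2}\min_{y'\in A}\es_A(y') > 0$ for all $y\in A$ then gives (\ref{maineq}).
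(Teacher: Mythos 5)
Your argument is essentially correct and is a streamlined version of the paper's proof: the key estimate is the same (combine lemma~\ref{holder} with \textbf{wH(}$C$\textbf{)} to get $H_{\partial B}(\cdot,z)=H_{\partial B}(x_0,z)(1+O(M^{-\nu}))$ on $B(x_0,r_A)$, choosing $\lambda_M$ so that the two nested concentric balls fit inside $B$), but you organize the rest more economically. The paper instead fixes $x\in\partial B(x_0,r_A)[B_3,A]$, introduces the return time $\eta=\inf\{j\ge\tau_A: X_j\in\partial B(x_0,r_A)\}$, proves $P_x(X_{\tau_{\partial B_3}}=z\mid \tau_A<\tau_{\partial B_3})\approx H_{\partial B_3}(x_0,z)$ via the strong Markov property at $\eta$, solves the two-by-two conditioning identity for $P_x(X_{\tau_{\partial B_3}}=z\mid\tau_A>\tau_{\partial B_3})$, and only then transports back to $y\in A$ by a second strong-Markov decomposition; your single application of optional stopping to $v(w)=P_w(X_{\overline\tau_{\partial B}}=z)$ at $\tau=\tau_A\wedge\tau_{\partial B}$ subsumes all of that. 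One small notational repair: with the paper's convention $\tau_{\partial B}=\inf\{k\ge1:\ldots\}$, you should define $v$ via $\overline\tau_{\partial B}$ so that $v=\mathbf 1_{\cdot=z}$ on $\partial B$; for $w$ in the interior (in particular $w=x_0$ and $w\in A$) the two definitions coincide, so this is cosmetic.

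You are right to flag the division by $g(y)=P_y(\tau_{\partial B}<\tau_A)$ as the delicate step. In fact the paper's proof has exactly the same step hidden in passing from its display $p=q_0a+q_1(1-a)$ to equation (3.8): one gets $q_0=h\bigl(1+\epsilon_2+(\epsilon_1-\epsilon_2)/a\bigr)$ and so needs $a=P_x(\tau_A>\tau_{\partial B_3})$ bounded below, which the paper does not comment on. Your fix via transience ($g(y)\nearrow\es_A(y)>0$, enlarge $\lambda_M$) is correct, but note the resulting constant depends on $A$ and on the graph, not only on $C$ and $c$ --- so the lemma's final sentence is not established by your argument (nor, as far as I can see, by the paper's). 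This weaker version is all that the proof of Theorem~I actually uses, since $A$ is fixed there and only $M\to\infty$; and it is consistent with the recurrent analogue, lemma~\ref{lemma2d}, where the paper explicitly states that the $O(\cdot)$ constant depends on $\omega$ and $A$ and where the corresponding lower bound $P_x(\tau_A>\widetilde\sigma_n)\gtrsim \ln m/\ln n$ is proved and inserted, producing the extra $\ln n$ factor in (4.11).
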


\begin{proof}  
For $M> 2$,
choose $M_2$ and $M_3$ such that
 $$M_2 > M(x_0 , M r_A)\  \hbox{ and } \     M_3 > M(x_0 , M_2 M r_A)$$
where  $M(x_0, \cdot )$ are the shrinking parameters that appear in {\bf wH(}$C${\bf )}.

Let $B_1 = B(x_0, M  r_A)$, $B_2 = B(x_0, M_2 M r_A)$ and
$B_3 = B(x_0, M_3 M_2 M  r_A)$.

For $z\in \partial B_3 $,  we consider the function 
$$f(x)= P_x ( X_{\tau_{\partial B_3}} =z), \quad x\in \Gamma. $$

Since $f$ is harmonic on $B_2$, by lemma \ref{holder}, 
   for all $u\in B_1$,
\begin{equation*} 
 |f(u)-f(x_0)|\leq c \Big( \frac{D(x_0, u)}{M r_A}\Big)^{\nu}
 \max_{B_2} f.  
\end{equation*}
In particular, for $u\in\partial B(x_0,  r_A)$,
\begin{equation} \label{1}
 |f(u)-f(x_0)|\leq  \frac{c}{M ^{\nu}}
 \max_{B_2} f.  
\end{equation}

Now by considering $f$ harmonic on $B_3$, 
since the graph verifies {\bf wH(C)}, we have that
\begin{equation} \label{2} 
\max_{B_2 } f \leq C f(x_0). 
\end{equation}

Therefore, by (\ref{1}) and (\ref{2}),  for all $u\in \partial
B(x_0, r_A) $, 
\begin{equation} \label{3}  
P_u ( X_{\tau_{\partial B_3}} =z) =   H_{\partial B_3}(x_0,z) \left[1+O \Big(
M^{-\nu}\Big) \right]. 
\end{equation}

Introduce the following notation.
For  $U, V$ and $W$ subsets of $\Gamma$ with $U\subset V \subset W.$
We put
\begin{equation}\label{crossing}
\partial V [W,U]=  \{ x \in \partial V;\text{ there   exist   paths
  in  $\Gamma$  from  $ x$  to $\partial W$ and from $x$  to $U$ }  \}.
\end{equation}
On the set $\{ \tau_A<\tau_{\partial B_3}\}$, we let $\eta=\inf\{ j\geq
\tau_A;\ X_j\in  \partial B(x_0, r_A) \}$. 

Then using (\ref{3}),
we obtain that for all $x\in \partial B(x_0,  r_A) [B_3 , A]$
\begin{eqnarray}\label{4}
P_x ( X_{\tau_{\partial B_3}} =z | \tau_A<\tau_{\partial B}) &=& \sum_{u\in \partial B(x_0,  r_A) } 
P_x ( X_{\eta} =u | \tau_A < \tau_{\partial B_3}) P_u ( X_{\tau_{\partial B_3}} =z )\nonumber
\\
&=&   H_{\partial B_3}(x_0,z) [1+O \Big( M^{-\nu}\Big)]  
\end{eqnarray}
Let $x \in \partial B(x_0, r_A) [ B_3, A ]$.
By  (\ref{3}) and (\ref{4}), we get
from the relation 
\begin{eqnarray*}
  P_x ( X_{\tau_{\partial B_3}} =z)  & =  & 
P_x ( X_{\tau_{\partial B_3}} =z|\tau_A > \tau_{\partial B_3}) P_x( \tau_A
> \tau_{\partial B_3} ) \\
 & & \qquad + P_x ( X_{\tau_{\partial B_3}} =z|\tau_A \leq \tau_{\partial B_3})(1-P_x(
\tau_A > \tau_{\partial B_3} )),
\end{eqnarray*}
that
 \begin{equation*} P_x ( X_{\tau_{\partial B_3}} =z|\tau_A > \tau_{\partial B_3}) =   H_{\partial B_3}(x_0,z) [1+O \Big( M^{-\nu}\Big)] 
\end{equation*}
This can also be written as,
\begin{equation}\label{4b}
P_x ( X_{\tau_{\partial B_3}\wedge \tau_A} =z  ) = 
  H_{\partial B_3}(x_0,z)P_x( \tau_A > \tau_{\partial B_3} ) 
  [1+O \Big( M^{-\nu}\Big)].
\end{equation}

Note that every path from $y$ to $\partial B_3$ must go through
some vertex
of $\partial B(x_0, r_A) [ B_3, A ]$.
So, for all $y \in A $ and for all $z\in \partial B_3$,
\begin{eqnarray*}
P_y ( X_{\tau_{\partial B_3}\wedge \tau_A} =z )
&=& \sum_{x \in \partial B(x_0, r_A) [ B_3, A ]} 
 P_y (X_{\tau_{\partial B(x_0, r_A) [ B_3, A ]}\wedge \tau_A} =x )P_x ( X_{\tau_{\partial B_3}\wedge \tau_A} =z )
\nonumber\\
&\stackrel {(\ref{4b})} =&   H_{\partial B_3}(x_0,z) [1+O \Big(M^{-\nu}\Big)] \nonumber \\
& \ & \hspace{0.5cm} \times \sum_{x \in \partial B(x_0,  r_A) [ B_3, A ]} 
 P_y (X_{\tau_{\partial B(x_0,  r_A) [ B, A ]}\wedge \tau_A} =x )P_x( \tau_A > \tau_{\partial B_3} ) \nonumber \\
 &=&  H_{\partial B_3}(x_0,z) [1+O \Big( M^{-\nu}\Big)]
 P_y( \tau_A > \tau_{\partial B_3} ).
\end{eqnarray*}
This last equation proves that lemma \ref{lemma} holds with
 $\lambda_M =  M_2 M_3$ where $M_2= M(x_0 , M r_A) $ and $M_3 = M(x_0 , M_2 M r_A)$.
\end{proof}

As in Lawler \cite[p. 49]{Law96}, using a last exit
decomposition, we obtain the following representation of the hitting
distribution in a weighted graph.

 Let $(\Gamma,a)$ be a   weighted graph. 
The Green function of the random walk in $B\subset \Gamma$
is defined by
$$
G_B(x,y)  := \sum_{k=0}^\infty p_B(x,y,k),\quad x,y\in \overline B
$$
where $ p_B(x,y,k) := P_x(X_k=y, k < \overline\tau_{B^c})$ are the transition probabilities
of the walk with Dirichlet boundary conditions.

Let $A\subset B$ be finite subsets of $\Gamma$. Then for all $x\in B^c$ and $y\in A$,
\begin{eqnarray}\label{laweq1}
H_A(x,y)=\sum_{z\in \partial B} G_{A^c}(x,z) H_{A\cup\partial B}(z,y),
\end{eqnarray}

\begin{eqnarray*}\label{laweq2}
{\overline H}_A(x,y)=\frac{ \sum_{z\in \partial B} G_{A^c}(x,z) H_{A\cup\partial B}(z,y)
}{ \sum_{z\in \partial B} G_{A^c}(x,z)P_z(\tau_A<\tau_{\partial B}) }
\end{eqnarray*}
and
\begin{eqnarray*}\label{laweq3}
\min_{z\in \partial B} \frac{ H_{A\cup\partial B}(z,y)}{ P_z(\tau_A<\tau_{\partial B}) }
\leq {\overline H}_A(x,y) \leq 
\max_{z\in \partial B} \frac{ H_{A\cup\partial B}(z,y)}{ P_z(\tau_A<\tau_{\partial B}) }.
\end{eqnarray*}

 Then by reversibility, $\io \pi(z)H_{A\cup\partial B}(z,y)=
\pi(y)H_{A\cup\partial B}(y,z)$
and\sm $\io P_z(\tau_A<\tau_{\partial B}) =\sum_{\tilde y \in
  A}H_{A\cup\partial B}(z,\tilde y)$.
Hence,
\begin{eqnarray}\label{habg4} 
\min_{z\in \partial B} \frac{ \pi(y)H_{A\cup\partial B}(y,z)}{
  \sum_{\tilde y\in A}\pi(\tilde y) H_{A\cup\partial B}(\tilde y,z) }
\leq {\overline H}_A(x,y) \leq 
\max_{z\in \partial B} \frac{ \pi(y)H_{A\cup\partial B}(y,z)}{
  \sum_{\tilde y\in A}\pi(\tilde y) H_{A\cup\partial B}(\tilde y,z) }
\end{eqnarray}

We complete the proof of Theorem I with the help of
(\ref{habg4}).

\begin{proof}[{\bf{Proof of Theorem I.}} ]
Let $A$ be a finite subset of $\Gamma$ and let $x_0\in \Gamma$.

Let $r_A>0 $ be such that $A\subset B(x_0, r_A)$.

Let $B=B(x_0, \lambda M r_A)$ where $\lambda\geq \lambda_M$ is given by lemma \ref{lemma}.

By equation (\ref{maineq}),   for all $y\in A$ and $z\in \partial B$,
\begin{equation} \label{eq2}
\pi(y) H_{A\cup \partial B} (y,z) =
   H_{\partial B}(x_0,z) [1+O \Big( M^{-\nu}\Big)]  \pi(y)   P_y( \tau_A > \tau_{\partial B} ).
\end{equation}

By summing over $y\in A$ the equation  (\ref{eq2}) gives,
\begin{equation} \label{eq1}
\sum_{y\in A } \pi(y) P_y ( X_{\tau_{\partial B}\wedge \tau_A} =z )=
   H_{\partial B}(x_0,z) [1+O \Big(M^{-\nu}\Big)]  \sum_{y\in A}  \pi(y) P_y( \tau_A > \tau_{\partial B} ).
\end{equation}

Since $(\Gamma, a)$ is connected, both sides of  (\ref{eq1}) are positive.  So we can divide   (\ref{eq2}) by (\ref{eq1}). And  a short calculation shows that
\begin{equation*}
\frac{ \pi(y) H_{A\cup \partial B} (y,z) }{\sum_{\tilde{y}\in A } \pi(\tilde{y})P_{\tilde{y}} ( X_{\tau_{\partial B}\wedge \tau_A} =z )} = 
\frac{\pi(y) P_y( \tau_A > \tau_{\partial B} ) }{\sum_{\tilde{y}\in A}  \pi(\tilde{y}) P_{\tilde y}( \tau_A > \tau_{\partial B} ) } [1+O \Big(N^{-\nu}\Big)]
\end{equation*}
where the constant in $O(\cdot)$ still depends only on
the constants $C$ and $c$
that appear  in   {\bf wH(}$C${\bf)} and in lemma \ref{holder} respectively. 

By (\ref{habg4}), we have that for all $v\notin B$,
\begin{eqnarray*}
\min_{z\in \partial B} \frac{ \pi(y)H_{A\cup\partial B}(y,z)}{ \sum_{\tilde{y}\in A}\pi(\tilde{y}) P_{\tilde{y}}( X_{\tau_{\partial B}\wedge \tau_A} =z ) }
\leq {\overline{H}}_A(v,y) \leq 
\max_{z\in \partial B} \frac{ \pi(y)H_{A\cup\partial B}(y,z)}{ \sum_{\tilde{y}\in A}\pi(\tilde{y}) P_{\tilde{y}} ( X_{\tau_{\partial B}\wedge \tau_A} =z ) }
\end{eqnarray*}

So for all $v\notin B$ we get:
\begin{equation}\label{eq4}  \overline{H}_A(v,y) =\frac{\pi(y) P_y( \tau_A
    > \tau_{\partial B} ) }{\mathop{\rm Cap}_B(A) }
 [1+O \Big(M^{-\nu}\Big)]
 \end{equation}
As $v$ goes to $+\infty$ in an arbitrary way, we will have that $M \to
\infty$ as well. Hence, by (\ref{eq4}),   we obtain that
$ \lim_{v\rightarrow +\infty}  \overline{H}_A(v,y)$ exists and
  $$ \lim_{v\rightarrow +\infty}  \overline{H}_A(v,y)= \lim_{m\rightarrow +\infty} \overline{H}_A^m(v,y)= 
  \frac{\pi(y) P_y( \tau_A > +\infty  ) }{\sum_{\tilde{y}\in A}
    \pi(\tilde{y}) P_{\tilde y}( \tau_A > +\infty ) }.$$
\vskip-5mm
\end{proof}


\section{Recurrent graphs}\label{recurrent}

In this section, we prove the existence of the harmonic measure
for  the random walk on a supercritical percolation cluster
of $\Z^2$. The proof for the uniformly elliptic random
walk on $\Z^2$ is similar but with many simplifications
since we can use the estimates of \cite{Del99}
instead of Barlow's estimates.

\subsection{Estimates of the capacity of a box}\label{Kestensgrid}

\begin{prop}\label{capalog}
Let $p>p_c({\mathbb Z}^2)$. 
There is a constant $C\geq 1$ such that 
${\mathbb P}_p $-a.s. for $x_0\in {\mathcal C}_\infty$,
for all $n$ sufficiently large,
\begin{equation}
  \label{capabds}
C^{-1} \leq (\ln n) {\mathop{\rm Cap}}_{B_\o(x_0,n)} ( \{x_0\} ) 
\leq
C.
\end{equation}
\end{prop}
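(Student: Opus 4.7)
The plan is to identify the capacity with the reciprocal of an effective resistance and then to prove matching $\ln n$ bounds for that resistance. Using the standard identity for reversible random walks on a finite connected network,
$$\text{Cap}_{B_\omega(x_0,n)}(\{x_0\}) \;=\; \pi(x_0)\,P_{x_0}(\bar\tau_{B_\omega(x_0,n)^c} < \tau_{\{x_0\}}) \;=\; \frac{1}{R_n},$$
where $R_n := R_{\mathrm{eff}}^{\omega}(x_0 \leftrightarrow B_\omega(x_0,n)^c)$ is computed in the subnetwork of $\C_\infty(\omega)$ induced on $B_\omega(x_0,n)$ with conductances $a(e)$. Since $1\le\pi(x_0)\le 2d=4$, proving (\ref{capabds}) reduces to $c\ln n \le R_n \le C\ln n$.

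For the \emph{lower} bound on $R_n$ (the upper bound on the capacity), I would apply the Nash--Williams inequality with Euclidean $\ell^{\infty}$ cutsets. For $k = 1, \ldots, \lfloor n/\mu\rfloor$ set
$$\tilde\Pi_k \;=\; \{\,e = \{x,y\} \in \C_\infty(\omega) : \Vert x-x_0\Vert_\infty < k \le \Vert y-x_0\Vert_\infty\,\}.$$
These are pairwise disjoint and $|\tilde\Pi_k| \le 8k$ because $\tilde\Pi_k$ sits in the boundary of an $\ell^{\infty}$-box. By the Antal--Pisztora estimate (\ref{AP2}), for $n$ sufficiently large (on an event of full measure) $B_\omega(x_0,n)^c \subseteq \{\Vert y-x_0\Vert_\infty \ge n/\mu\}$, so each $\tilde\Pi_k$ is a cutset separating $x_0$ from $B_\omega(x_0,n)^c$. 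Then Nash--Williams yields
$$R_n \;\ge\; \sum_{k=1}^{\lfloor n/\mu\rfloor}\frac{1}{|\tilde\Pi_k|} \;\ge\; \frac{1}{8}\sum_{k=1}^{\lfloor n/\mu\rfloor}\frac{1}{k} \;\ge\; c\ln n.$$

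For the \emph{upper} bound on $R_n$ (the lower bound on the capacity), I would construct a unit flow $\theta$ from $x_0$ to $\partial B_\omega(x_0,n)$ using the Kesten grid built in this section. At each dyadic scale $2^j$ with $j_0(\omega)\le j\le \log_2 n$, the grid furnishes an open circuit in the Euclidean annulus $B^{\Z^2}(x_0,2^{j+1})\setminus B^{\Z^2}(x_0,2^j)$ together with $\Theta(2^j)$ essentially disjoint open radial paths joining the scale-$j$ circuit to the scale-$(j+1)$ circuit, each of length $O(2^j)$ by (\ref{AP2}). Routing $\theta$ uniformly through the $\Theta(2^j)$ parallel paths at scale $j$, every edge carries flow $O(2^{-j})$, so the energy contribution from scale $j$ is $O(2^j\cdot 2^j\cdot 2^{-2j}) = O(1)$. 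Summing the $O(\ln n)$ dyadic scales (and absorbing the innermost $\omega$-dependent scales into an additive $O(1)$) gives $\mathcal{E}(\theta) \le C\ln n$, and Thomson's principle yields $R_n \le \mathcal{E}(\theta) \le C\ln n$.

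The main obstacle is this last step: making the flow construction rigorous requires the combinatorial bookkeeping for the Kesten grid (how many parallel radial paths at each scale, how they are glued across annuli, and how to dispose of the innermost $\omega$-dependent scales) to produce an explicit $\omega$-a.s.\ bound $R_n \le C\ln n$ with $C$ deterministic. The probabilistic input is twofold: the 2D supercritical percolation statement that open circuits in dyadic annuli exist with uniform positive probability across scales (whose Borel--Cantelli consequence produces $j_0(\omega)$), and the Antal--Pisztora linearity (\ref{AP2}) ensuring that each constituent open path has chemical length comparable to its Euclidean diameter.
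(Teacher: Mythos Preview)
Your Nash--Williams argument for the upper bound on the capacity is correct and is precisely what the paper means by ``variational principle and a comparison with $\Z^2$'' (no further detail is given there). One small correction: from (\ref{AP2}) you get $D_\omega(x_0,y)\leq\mu|y-x_0|_1\leq 2\mu\Vert y-x_0\Vert_\infty$, so the inclusion should be $B_\omega(x_0,n)^c\subseteq\{\Vert y-x_0\Vert_\infty\geq n/(2\mu)\}$; this only affects the final constant.

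For the lower bound on the capacity your route diverges from the paper's. The paper does \emph{not} use a dyadic multiscale construction with circuits in annuli. It works with a \emph{single} Kesten grid at scale $n$: the box $[-n,n]^2$ is partitioned into squares of side $C_K\ln n$, and by Kesten's theorem each horizontal and vertical strip of that width carries at least $c(p)C_K\ln n$ disjoint open channels. The paper then draws $2J+1\asymp n/\ln n$ straight ``diameters'' through $x_0$, associates to each the chain of grid squares it meets, and threads $c(p)C_K\ln n$ open paths along each such chain using the channels; two short Antal--Pisztora paths connect everything to $x_0$. This yields a flow of intensity $\Theta(n)$. The key combinatorial fact is that a grid square in column $i$ lies on at most $CJ/(|i|+1)$ diameters, which gives total energy $O(n^2\ln n)$ and hence $1/\capa\leq C\ln n$ by Thomson's principle.

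Your dyadic scheme is a legitimate alternative and would give the same bound, but it is not what the Kesten grid of this section actually furnishes: that grid has $\ln n$-size cells at a single scale and produces channels, not circuits or radial crossings at every dyadic scale. To carry out your plan you would have to invoke Kesten's channel theorem (and an annulus-circuit statement) separately at each scale $2^j$ and then handle the gluing of flows across consecutive annuli --- exactly the bookkeeping you flag as the main obstacle. The paper's diameter construction sidesteps all of this by building the flow in one pass at scale $n$; the price is the slightly unusual energy estimate via the column count $J/(|i|+1)$.
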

  
Flows of finite energy on the supercritical percolation
cluster with respect to convex gauge functions
are constructed in \cite{ABBP06}.
To do so, the flow is expressed  by a probability
 on the set of self-avoiding paths.
Here, however, the lower estimate  of (\ref{capabds}) is obtained 
by combining the method used in $\Z^2$, see \cite[Proposition 2.14]{LyPe},
with a percolation lemma of Kesten \cite[Theorem 7.11]{Kes82}.

\begin{proof}
The upper bound follows from the variational principle and a
comparison with ${\mathbb Z}^2$ (see for instance \cite[section 3.1]{Tel06}).

To prove the lower bound, we assume $0\in\C_\infty$ and for each $n$ sufficiently large,
we construct a particular flow $\theta_n$ from $0$ to $\partial  B_\o(0,n)$.
However, it is a difficult task to estimate the energy of a flow from $0$ to $
\partial B_\o(0,n)$ consisting of small flows along simple paths from
$0$ to $ \partial B_\o(0,n)$ since  the percolation cluster   is very irregular.
 So, as in Mathieu and Remy \cite{MaRe04}, using a Theorem of Kesten
 \cite{Kes82}, we construct a grid of open paths in  $[-n;n]^2$  which
 we will call Kesten's grid.


{ \bf{Construction of Kesten's grid}} 

 Let us introduce some definitions.
 \begin{defn}  Let $B_{m,n}=([0;m]\times [0;n])\cap \Z^2$.

A horizontal [resp. vertical] channel of $B_{m,n}$ is a path in $\Z^2$
$(x_0,x_1,x_2,...,x_L)$  
such that:
\begin{enumerate}[$\bullet$]
 \item $\{x_1,x_2,...,x_{L-1}\} $ is contained in the interior of $B_{m,n}$
 \item $x_0\in \{0\}\times [0;n]$ [resp. $x_0\in [0;m]\times \{0\}$]
 \item $x_L\in \{m\}\times [0;n]$ [resp. $x_L\in [0;m]\times \{n\}$]
 \end{enumerate} 
 \end{defn}
 We say that two channels are disjoint if they  have no vertex in
 common. Let $N(m,n)$ be the maximal number of disjoint open
 horizontal channels in $B_{m,n}$. 
Then by \cite[Theorem 11.1] {Kes82}, for $p>p_c$,
there is a constant $c(p)$ and some universal constants
 $0<c_4, c_5, \xi < \infty$, such that

 \begin{equation}
\P_p\big(N(m,n)>c(p)n \big)\geq 1-c_4 (m+1) \exp(-c_5(p-p_c)^{\xi}n).\label{kesten}
\end{equation}

We apply this result to the number of disjoint open channels
in a horizontal strip of length $n$ and width $C_K\ln n$ contained in
 $[-n;n]^2$. If $C_K$ is large enough so that
$c_5(p-p_c)^\xi C_K >3$ then
$$ \sum_n n  c_4 (n+1)\exp(-c_5(p-p_c)^{\xi}C_K
\ln n)<\infty.$$

Hence by (\ref{kesten}) and  Borel-Cantelli lemma, we get that for $n$ large
 enough there is at least $c(p)C_K \ln n$ disjoint open channels in each
 horizontal strip of  length $n$ and width $C_K\ln n$ contained in
 $[-n;n]^2$. We do the same construction for vertical strips. 
Finally, we obtain  that each horizontal and each vertical strip of length $n$
 and width $C_K\ln n$ in $[-n;n]^2$  contains at least  $c(p) C_K\ln n$ disjoint open
 channels.

Fix $n$ large enough so that there is a Kesten's grid
and  let $J$ be the largest integer such that
$$(J+1/2) C_K\ln n < n.$$

Set $N := (J+1/2) C_K\ln n$.
Divide  $\io [- N ;   N]^2$ into squares $S_{i,j}$ of side $C_K \ln n $
 centered at $(iC_K\ln n; j C_K\ln n )$ for $-J\leq i, j \leq J$, $i,j\in\Z$.
Denote   this set of  $(2J+1)^2$  squares by ${\cS}_n$.

Since $B_\o(0,N)  \subset \C_{\infty} \cap [-N ;  N]^2$,
we have that $\io\capa_{\C_{\infty} \cap [-N ;  N]^2 }(0)\leq \capa_{B_\o(0,N)}(0)$.
Hence, to obtain a lower bound, 
it suffices to construct a flow $\theta_n$ from $0$ to
the vertices of $\C_{\infty}$ on the sides of $ [-N ;  N]^2$. \\

{ \bf{Construction of the flow}} 

To each open path $\Pi : (x_0,x_1,x_2, ... ,x_L)$
from $x_0=0$ to a side of $ [-N ;  N]^2$ with the induced orientation,
we  associate the unit flow  with source at $0$,
$\Psi^{\Pi}= \sum_{\ell=1}^L (\1_{\{\er_\ell\}} -\1_{\{\el_\ell\}})$ where $\er_\ell$ is the edge
from $x_{\ell-1}$ to $x_{\ell}$.
The flow $\theta_n$  will be a sum of  flows 
$\Psi^{\Pi}$ for a set $\mathcal{P}_n$ of well chosen open paths.

 \begin{defn}
A sequence   $(S_k ; -m\leq k\leq m)$  of squares of $\cS_n$ is called a  {\it{path of squares}}
 if for $-m\leq k < m$,  the squares  $S_k $ and $S_{k+1}$ have a common side.
 \end{defn}

We now proceed in three steps.
First, to each square of $\cS_n$ on the left side of $[-N ; N]^2$,
 we construct a path of squares of $\cS_n$  from left to right containing the square
 centered at $(0;0)$.
See figure 1.
 In the second step, for each of these paths of squares, we contruct $c(p)C_K\ln n$
open paths using Kesten's grid. And in the last step, we show how to
connect
these open paths to $(0;0)$.
 
 \begin{center}
 \begin{figure} [h!]\label{chemboit}
\includegraphics[width=6cm,height=9cm]{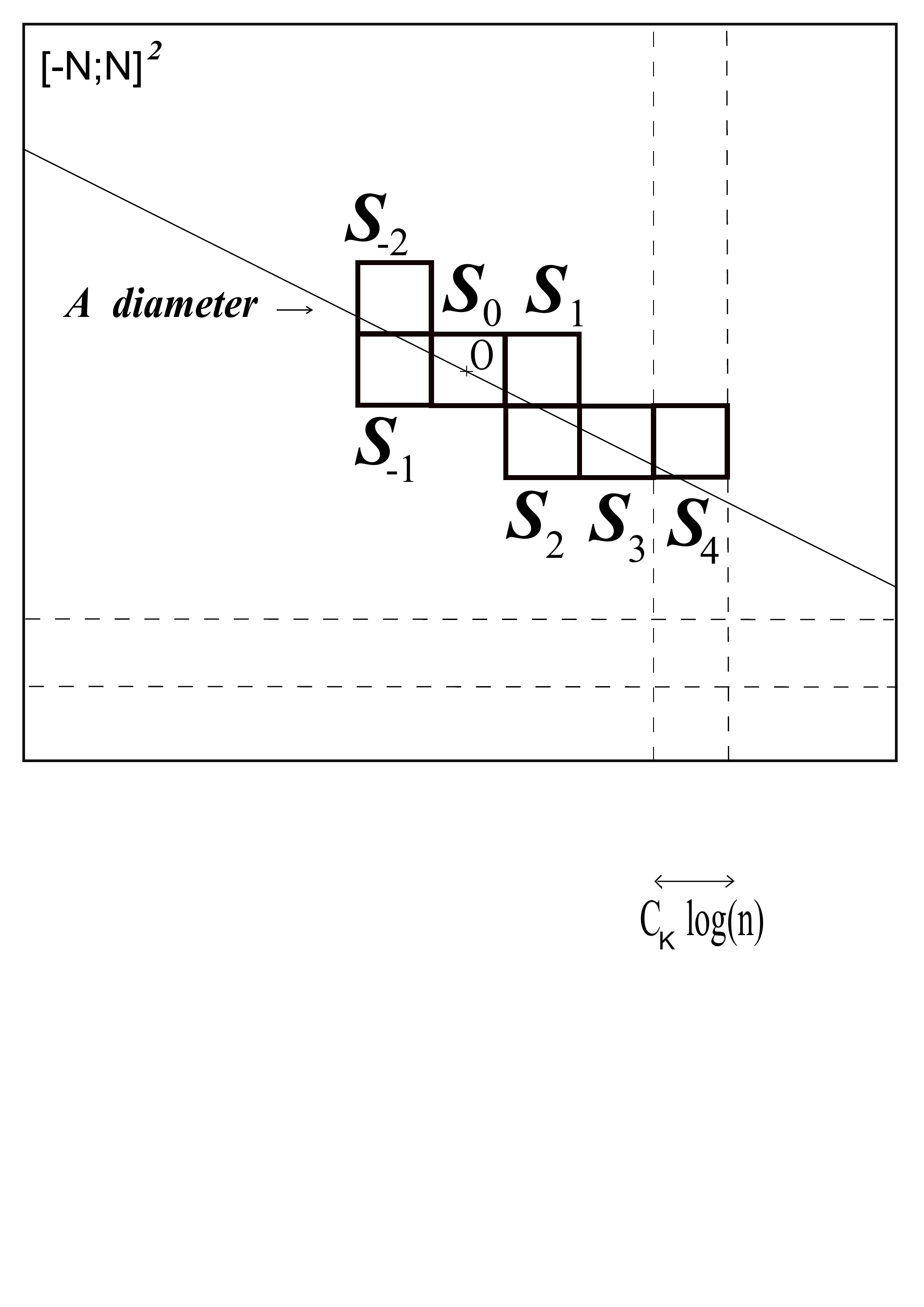}
\vskip-2cm\caption{A diameter and some squares of the associated path of squares.}
\end{figure}
\end{center}

We call  {\it{diameter}}, a line segment  from  $( -N  ; j C_K\ln n )$
 to $(N; -j C_K\ln n; )$ for $-J\leq j \leq J$.
 
 To each diameter, we associate a path of squares
$(S_k; -m\leq k\leq m)$
consisting of the squares of $\cS_n$ which intersect the diameter.
Whenever a diameter goes through a corner of
 a square of $\cS_n$,  add an additional square (there are two choices)
 in order to obtain a path of squares.

Now fix one of these $2J+1$ paths of squares. 

Let $S_k, S_{k+1},
\ldots, S_{k+K-1} $, with $2\leq K\leq 2 J+1$, be a horizontal stretch of
squares within a path of squares that is, a horizontal path of squares
of maximal length $K$. Then each of the $c(p)C_K\ln n$ horizontal open channels
crossing this horizontal strip contains an open path from the leftmost side
of $S_k$ to the rightmost side of $S_{k+K-1}$ which lies entirely inside
$S_k \cup S_{k+1}\cup
\ldots \cup S_{k+K-1} $. Indeed, running along an open channel from left to
right,
it consists of the edges of the channel between the last visited vertex of the leftmost side
of $S_k$ to the first visited vertex of the rightmost side
of $S_{k+K-1}$. See case (i) of   figure 2. 
We proceed similarly for all horizontal and vertical stretches of the
path of squares.

Whenever the path of squares turns, the vertical and
horizontal open paths of the corresponding stretches must be
connected together. For instance,
to go right along a horizontal stretch to down along a vertical stretch,
the lowest horizontal channel is attached to the leftmost vertical
channel, the second lowest horizontal channel is attached
to the second leftmost vertical
channel and so on.  See case (ii) of   figure 2. 
We proceed similarly for the other turns. 
The open paths obtained by this procedure  might not be disjoint but
each  of their edges belongs to at most two paths.
Actually, a slightly more complicated rule
would yield disjoint open paths.
\begin{center}
 \begin{figure} [h!]\label{two}
\includegraphics[width=7cm,height=6.5cm]{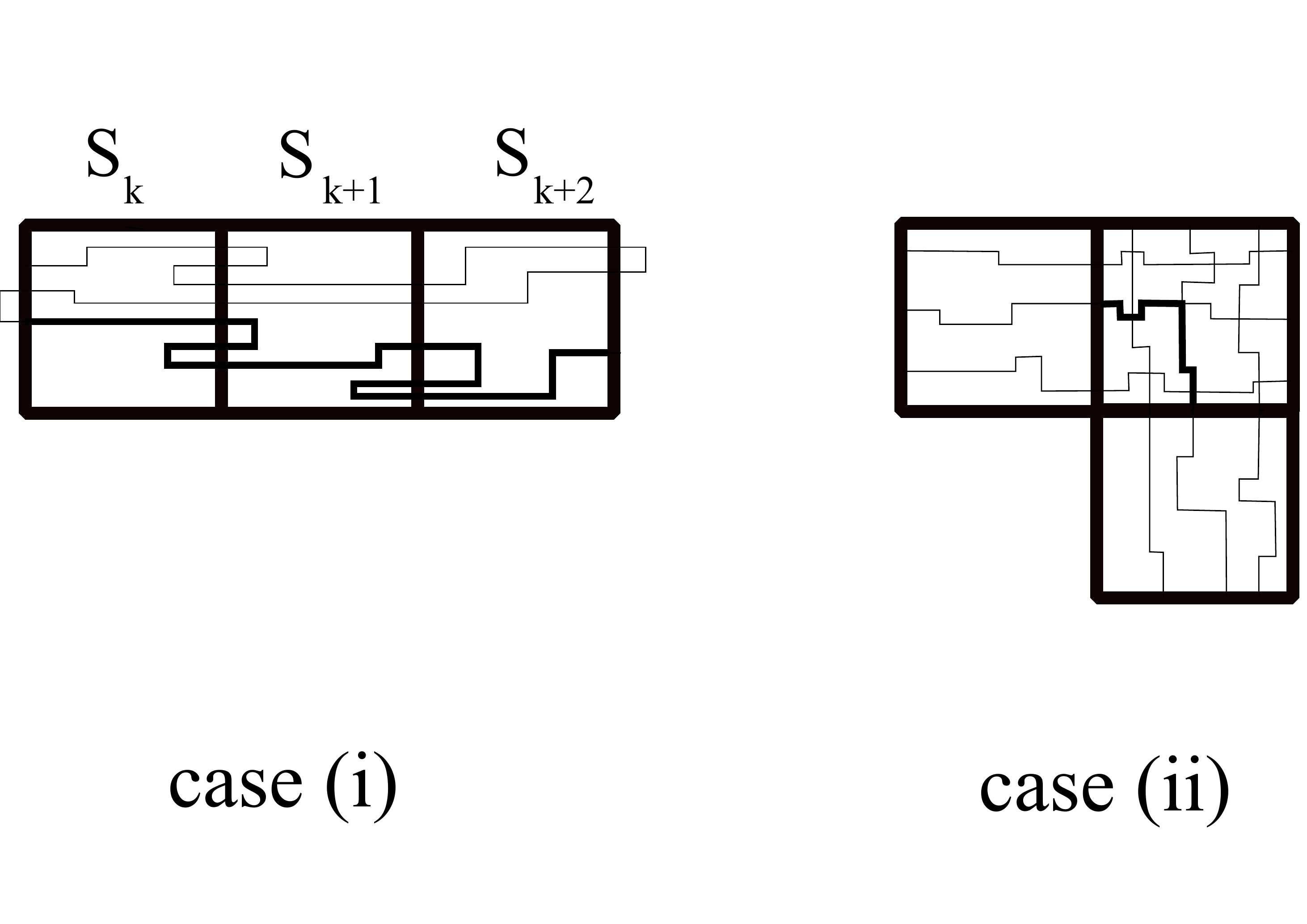}
\vskip-0.6cm\caption{ In bold, the chosen portion of the open channels.}
\end{figure}
\end{center}
The second step is completed and we have constructed a set $\mathcal{P}'_n$ of
$(2J+1) c(p)C_K\ln n$ open
paths from  the left  to the right  side of $[-N ; N]^2$.

The last step is to connect these paths to $(0;0)$.
Choose two vertices of the infinite cluster $\C_\infty$, $x_1$ and $x_2$.
$x_1$ is chosen in the square of $\cS_n$ centered at $(0;C_K\ln
n)$ and $x_2$  in the square of $\cS_n$ centered at $(0;-C_K\ln n)$.
Then for $ \ell  = 1, 2$, let $\pi_\ell$ be an open path from
$x_\ell$  to $(0;0)$ for which the chemical distance is attained.
By  (\ref{AP2}), for $n$ large enough,
\begin{equation}
  \label{longpi}
  \hbox{$\pi_1$ and $\pi_2$ have lengths less than}\ \ 4\mu C_K \ln n.
\end{equation}

Since each path of $\mathcal{P}'_n$ intersects $\pi_1\cup\pi_2$,
we can associate to each path of $\mathcal{P}'_n$ two open paths from
$(0;0)$, one to the left side of $[-N ; N]^2$ and the other one to the right side. 
A specific rule would be to follow a path of $\mathcal{P}'_n$ starting
on the left side of $[-N ; N]^2$ up to its first intersection with
$\pi_1\cup\pi_2$
which we then follow up to $(0;0)$ and for the second path, start
on the right side of $[-N ; N]^2$ up to the first intersection with
$\pi_1\cup\pi_2$ which we then follow up to $(0;0)$.

The set $\mathcal{P}_n$  consists of all these open paths from
$(0;0)$
to the left or the right side  of $[-N ; N]^2$ and we set
$$ \theta_n = \sum_{\Pi \in \mathcal{P}_n } \Psi^{\Pi}.$$ 
The intensity of the flow $\theta_n$ is
\begin{equation}\label{intense}
(2J+1)c(p)C_K \ln n\geq Cn.
\end{equation}

{ \bf{The energy of the flow}} 

Let us now obtain an upper bound on the energy of the flow $\theta_n$. 

A square $S_{i,j}\in\cS_n$, $-J\leq i,j\leq J$,  belongs to less than
$C\  J /(\vert  i\vert +1)$ paths of squares for some constant $C>0$
independent of $n$. Hence for $-J\leq i,j\leq J$,
\begin{equation}\label{aa} 
\hbox{each edge of}\  S_{i,j}\ \hbox{ belongs to less than} \ \ 
C\  J /(\vert  i\vert +1)\  \hbox{paths of}\   \mathcal{P}'_n.
\end{equation}

Finally, gathering (\ref{longpi}),  (\ref{intense}), (\ref{aa}), by Thomson's principle (see for instance \cite[section
2.4]{LyPe}), we get 

\begin{eqnarray*}
\dfrac{1}{ {\mathop{\rm Cap}}_{B_\o(0,N)} ( 0)} 
&\leq&  \frac{C}{n^2} \sum_{e \in \lbrack -N ; N\rbrack^2}  \theta_n(e)^{2} \\
&\leq &  \frac{C}{n^2} \left[ \sum_{-J\leq i,j\leq J}  \sum_{ e\in S_{i,j}}
\theta_n(e)^2+   \sum_{ e\in \pi_1\cup\pi_2 } \theta_n(e)^2\right] \\
&\leq &  \frac{C}{n^2} \left[\  \sum_{1\leq j\leq J} j \left(\frac{J}{j}\right)^2  
  ( 2C_K \ln n)^2 +  n^2   \ln n  \ \right] \\
&\leq &  C\ln n
\end{eqnarray*}
\end{proof}

\subsection{The Green kernel and its properties}

In this section, we will use the parabolic Harnack inequality for the
random walk on the supercritical 
percolation cluster proved by Barlow and Hambly in \cite{BaHa09}.

Besides this, we also use the comparison result for $D$
and the $\vert\cdot\vert_1$-distance
of Antal and Pisztora \cite{AnPi96}, see (\ref{AP2}).

\begin{lem}\label{recgreen} $\P_p$-almost surely, for all
$x_0, x\in \C_{\infty}$, the series 
\begin{equation}
  \label{grseries}
  \sum_{k=0}^\infty \left[  p(x_0,x_0,k) - p(x,x_0,k)\right]
\end{equation}
converges. The limit will be denoted by $g(x,x_0)$.

Let $G_{2n}(x,y)$ and $p_{2n}(x,y,k)$ be respectively 
the Green function and the probability transitions of the random walk in the ball
$B(x_0, 2n)$
with Dirichlet boundary conditions. Then
\begin{eqnarray}\label{greenker}
  g(x,x_0)   =   \lim_n \sum_{k=0}^\infty \left[  p_{2n}(x_0,x_0,k) - p_{2n}(x,x_0,k)\right]
     =  \lim_n  [G_{2n}(x_0,x_0) - G_{2n}(x,x_0)] 
\end{eqnarray}
\end{lem}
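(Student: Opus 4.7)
My plan is to prove the two assertions in Lemma \ref{recgreen} separately: first the convergence of the series (which defines $g(x,x_0)$), then the two Green-function identities by a harmonic-function representation of $a_n(y):=G_{2n}(x_0,x_0)-G_{2n}(y,x_0)$.

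\textbf{Convergence of the series.} Barlow's Gaussian upper bound \cite{Bar04} gives $p(y,x_0,k)\leq C/k$ for $y\in\{x_0,x\}$ and $k\geq k_1(\omega,x_0,x)$. The parabolic Harnack inequality of Barlow--Hambly \cite{BaHa09}, applied to the caloric function $(y,j)\mapsto p(y,x_0,j)$ on a space-time ball of radius $\sqrt{k}$ centred at $(x_0,k)$, iterates in the standard parabolic oscillation-decay manner to give, for $k$ large enough relative to $D_\omega(x_0,x)^2$,
\[
|p(x_0,x_0,k)-p(x,x_0,k)|\leq \frac{C\,D_\omega(x_0,x)^{\nu}}{k^{1+\nu/2}}
\]
for some $\nu>0$. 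Summing in $k$ shows absolute convergence of (\ref{grseries}) and thus defines $g(x,x_0)$. Splitting the series at $k\asymp D_\omega(x_0,x)^2$ (Gaussian bound below, Hölder bound above) also yields the growth estimate $|g(z,x_0)|\leq C(\log D_\omega(z,x_0)+1)$, which will be needed at the end.

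\textbf{Harmonic identification and vanishing.} Both $a_n$ and $g(\cdot,x_0)$ satisfy the discrete Poisson equation $\L u(y)=\1_{\{y=x_0\}}$ on $B_\omega(x_0,2n)$: for $a_n$ by the defining recursion of $G_{2n}$, and for $g$ by a telescoping computation on the absolutely convergent time-sum (which also uses $p(y,x_0,k)\to 0$, from the Gaussian bound). Hence $a_n-g(\cdot,x_0)$ is harmonic on all of $B_\omega(x_0,2n)$, including at $x_0$. Optional stopping at $\tau:=\tau_{\partial B_\omega(x_0,2n)}$, together with the boundary condition $a_n\equiv G_{2n}(x_0,x_0)$ on $\partial B_\omega(x_0,2n)$, gives $a_n(x)-g(x,x_0)=G_{2n}(x_0,x_0)-E_x^{\omega}[g(X_\tau,x_0)]$. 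Evaluating at $x=x_0$ (left-hand side zero) forces $G_{2n}(x_0,x_0)=E_{x_0}^{\omega}[g(X_\tau,x_0)]$, so
\[
a_n(x)-g(x,x_0)=E_{x_0}^{\omega}[g(X_\tau,x_0)]-E_x^{\omega}[g(X_\tau,x_0)].
\]
The function $h(y):=E_y^{\omega}[g(X_\tau,x_0)]$ is harmonic on $B_\omega(x_0,2n)$; shifting by $K_n:=\max_{z\in\partial B_\omega(x_0,2n)}|g(z,x_0)|$ to make it nonnegative and applying the Hölder estimate for positive harmonic functions on the cluster (Proposition \ref{holderperco}) gives $|h(x_0)-h(x)|\leq c(D_\omega(x_0,x)/n)^{\nu}(2K_n)$. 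The Antal--Pisztora bound (\ref{AP2}) forces $D_\omega(z,x_0)\leq Cn$ for $z\in\partial B_\omega(x_0,2n)$, so the growth estimate from the first stage yields $K_n\leq C\log n$. Altogether $|a_n(x)-g(x,x_0)|\leq C'(D_\omega(x_0,x)/n)^{\nu}\log n\to 0$, which is the last equality of (\ref{greenker}); the middle equality is simply Fubini applied to the (finite) series at each fixed $n$.

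\textbf{Main obstacle.} The delicate point is the identification $\L g(\cdot,x_0)\equiv\1_{\{x_0\}}$, because in the recurrent setting the individual time-sums defining $g(y,x_0)$ and $g(z,x_0)$ both diverge, and only their difference is summable. Justifying the interchange of the time-sum with the (finite) neighbour sum has to go precisely through the absolute Hölder-type bound produced in the first stage, together with the local finiteness of $\C_\infty(\omega)$.
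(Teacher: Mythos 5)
Your convergence argument for the series (\ref{grseries}) is essentially the paper's: the paper also considers the caloric function $v(n,y)=p(y,x_0,n)+p(y,x_0,n+1)$, applies the parabolic oscillation estimate \cite[Proposition 3.2]{BaHa09} together with the upper Gaussian bound \cite[Theorem 5.7]{Bar04}, and reaches the same pointwise bound $|p(x_0,x_0,k)-p(x,x_0,k)|\leq C\,\rho(x_0,x)^\nu k^{-1-\nu/2}$ before summing in $k$.

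For the identity (\ref{greenker}) you take a genuinely different route. The paper's proof is a one-liner: it invokes the Lebesgue dominated convergence theorem, passing $\lim_n$ through the series $\sum_k[p_{2n}(x_0,x_0,k)-p_{2n}(x,x_0,k)]$ using that the killed kernels increase to $p$ and that the differences are summably dominated in $k$. You instead observe that $a_n(\cdot)=G_{2n}(x_0,x_0)-G_{2n}(\cdot,x_0)$ and $g(\cdot,x_0)$ satisfy the same discrete Poisson equation $\L u = \1_{\{x_0\}}$ on $B_\omega(x_0,2n)$, so their difference is harmonic there; optional stopping at the exit time together with $a_n(x_0)=g(x_0,x_0)=0$ reduces the difference to $E_{x_0}[g(X_\tau,x_0)]-E_x[g(X_\tau,x_0)]$, the elliptic H\"older estimate of Proposition \ref{holderperco} (applied to the harmonic function $h(y)=E_y[g(X_\tau,x_0)]$ shifted to be nonnegative) bounds this by $c(D_\omega(x_0,x)/n)^\nu K_n$, and the logarithmic growth bound on $g$ obtained by splitting the defining series at $k\asymp D_\omega(x_0,x)^2$ gives $K_n\leq C\log n$, hence convergence to $0$. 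Your route is more explicit than the paper's: it sidesteps the uniform-in-$n$ domination of the killed heat kernel that is implicit in the paper's DCT step, produces a quantitative rate $O((D/n)^\nu\log n)$, and relies only on tools already isolated in the paper (the elliptic H\"older property and the Gaussian bound). One cosmetic remark: the appeal to Antal--Pisztora (\ref{AP2}) in estimating $K_n$ is unnecessary, since every vertex of $\partial B_\omega(x_0,2n)$ is at graph distance exactly $2n$ from $x_0$; the growth bound $|g(z,x_0)|\leq C\log D_\omega(z,x_0)$ already yields $K_n\leq C\log n$ directly.
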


\begin{proof}
Let $R_0$ be given by  the Harnack inequality for  the supercritical
cluster (\ref{HarnackBarlow2}). Then as in the proof of
\cite[Proposition 6.1]{Bar04}, we have that for $x\in \C_{\infty}$ and $R\geq
R_0(x)$,
$B(x,R)$ is very good (see \cite [definition 1.7]{Bar04}) 
with $N_B\leq R^{1/(10(d+2))}$ and it is exceedingly good  (see \cite [definition 5.4]{Bar04}) .

Now let $R\geq R_0(x) \vee 16$ and let $R_1= R\ln R$.
Then, since $R_1\geq R_0$,  $B=B(x, R_1)$ is very good with 
$N_B^{2d+4}\leq R_1^{(2d+4)/(10(d+2))}\leq R_1/(2\ln R_1)$.
Then by \cite[Theorem 3.1]{BaHa09}, there exists a constant $C_H$
such that the parabolic Harnack inequality \cite[(3.2)]{BaHa09}  holds
in $Q(x,R,R^2)$.
Therefore \cite[Proposition 3.2]{BaHa09} holds with $s(x_0) = R_0(x_0) \vee 16$
and $\rho(x_0,x) = R_0(x_0) \vee 16 \vee D(x_0, x)$

Fix $x_0\in \C_\infty$ then $v(n,x) := p(x,x_0,n) + p(x,x_0, n+1)$
 is a caloric function, that is, it verifies
$$v(n+1,x) - v(n,x) = \L v(n,x),\quad (n,x) \in {\mathbb N}\times
\C_\infty.$$

Let $k>4 D(x_0, x)^2$. Let $t_0= k+1$ and $r_0 = \sqrt {t_0}$.
Then $v(n,x)$ is caloric in $\rbrack 0, r_0^2\rbrack\times
B(x_0,r_0)$,
$x\in B(x_0 , r_0/2)$ since $D(x_0, x) \leq \sqrt{k} < r_0 / 2$,
and $t_0 - \rho(x_0, x)^2\leq k\leq t_0 - 1 $.

Then by the upper gaussian estimates \cite[Theorem 5.7]{Bar04} and
\cite[(2.18)]{BaHa09} and by \cite[Proposition 3.2]{BaHa09}, 
there is $\nu>0$ such that
   \begin{eqnarray*}
     \vert v(k,x) - v(k, x_0)\vert 
  & \leq & C \left( \frac {\rho(x_0,x)}{\sqrt{t_0}}\right)^\nu\sup_{Q_+}
  v\\
  & \leq & C \left( \frac
    {\rho(x_0,x)}{\sqrt{t_0}}\right)^\nu\frac{1}{r_0^2}\\
& \leq & C\   \frac {\rho(x_0,x)^\nu}{k^{1+\nu/2}}.
   \end{eqnarray*}
Note that we also have that for all $k>4 D(x_0, x)^2$,
 $$\vert p(x,x_0,k)-p(x_0,x_0,k)\vert \leq  C\   \frac
 {\rho(x_0,x)^\nu}{k^{1+\nu/2}}.$$ 
Hence (\ref{grseries})   converges.
Then (\ref{greenker})   follows by Lebesgue dominated convergence theorem.
\end{proof}

\begin{lem}
There is a constant $c_6\geq 1$ such that, $\P_p$-a.s.,
for all $x_0\in \C_{\infty} $ there is $\rho=\rho(x_0)$ such that if $D(x_0,x)>\rho$,
\begin{equation}
  \label{asympg2}
  c_6^{-1} \ln D(x_0,x)  \leq  g(x,x_0) \leq  c_6 \ln D(x_0,x) 
\end{equation}
\end{lem}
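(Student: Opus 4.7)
The plan is to express $g(x,x_0)$ as the product of two factors whose asymptotic sizes are controlled respectively by Proposition~\ref{capalog} and by the annulus Harnack inequality of Proposition~\ref{annulusharnack1}. Write $R := D_\omega(x_0,x)$, $B_n := B_\omega(x_0,2n)$, and
$$V_n(y) := P_y^\omega(\overline\tau_{B_n^c} < \tau_{x_0}), \qquad y \in \C_\infty(\omega).$$
The strong Markov property at $\tau_{x_0}$ gives, for $y\neq x_0$, $G_{2n}(y,x_0) = P_y^\omega(\tau_{x_0}<\overline\tau_{B_n^c})\,G_{2n}(x_0,x_0)$, hence
$$G_{2n}(x_0,x_0) - G_{2n}(x,x_0) = G_{2n}(x_0,x_0)\,V_n(x).$$
Since the number of visits to $x_0$ before leaving $B_n$ is geometric, $G_{2n}(x_0,x_0) = \pi(x_0)/\mathop{\rm Cap}_{B_n}(\{x_0\})$, and Proposition~\ref{capalog} yields $G_{2n}(x_0,x_0) \asymp \pi(x_0)\ln(2n)$ for $n$ large. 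Using Lemma~\ref{recgreen} the task reduces to showing $V_n(x) \asymp \ln R/\ln(2n)$ for $R$ and $n$ large.

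The crucial observation is that $V_n$ is positive and harmonic on $B_n\setminus\{x_0\}$. Apply Proposition~\ref{annulusharnack1} with $m=R$ and $r=1$ (so $B_\omega(x_0,1)=\{x_0\}$): for $\P_p$-a.e. $\omega$ and every $x_0\in\C_\infty(\omega)$ there is $\rho(x_0)$ such that, as soon as $R\geq\rho(x_0)$ and $3\mu R \leq 2n$, the function $V_n$ is harmonic on $B_\omega(x_0,3\mu R)\setminus\{x_0\}$ and
$$C^{-1} V_n(x) \leq V_n(y) \leq C\,V_n(x), \qquad y\in S_R:=\{z\in\C_\infty(\omega):D_\omega(x_0,z)=R\},$$
with $C$ depending only on $p$ and $\omega$ (not on $n$ or $R$). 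Since each step of the walk changes the $D_\omega$-distance to $x_0$ by at most one, a trajectory starting at $x_0$ that exits $B_n$ must first hit $S_R$, and in fact $\tau_{S_R}=\overline\tau_{B_\omega(x_0,R)^c}$ under $P_{x_0}^\omega$. The strong Markov property at $\tau_{S_R}$ gives
$$P_{x_0}^\omega(\overline\tau_{B_n^c}<\tau_{x_0}) = E_{x_0}^\omega\bigl[\mathbf 1_{\tau_{S_R}<\tau_{x_0}}\,V_n(X_{\tau_{S_R}})\bigr],$$
and combining this with the Harnack bound yields
$$V_n(x) \asymp \frac{P_{x_0}^\omega(\overline\tau_{B_n^c}<\tau_{x_0})}{P_{x_0}^\omega(\overline\tau_{B_\omega(x_0,R)^c}<\tau_{x_0})} = \frac{\mathop{\rm Cap}_{B_n}(\{x_0\})}{\mathop{\rm Cap}_{B_\omega(x_0,R)}(\{x_0\})}.$$

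A second application of Proposition~\ref{capalog}, now to both capacities, produces $V_n(x) \asymp \ln R/\ln(2n)$. Multiplying by $G_{2n}(x_0,x_0)\asymp\pi(x_0)\ln(2n)$ and passing to the limit $n\to\infty$ in Lemma~\ref{recgreen} yields $g(x,x_0) \asymp \pi(x_0)\ln R$. Since $\pi(x_0)\in\{1,2,3,4\}$ for the 2D Bernoulli cluster, the constant $\pi(x_0)$ is absorbed and (\ref{asympg2}) follows as soon as $D_\omega(x_0,x)\geq\rho(x_0)$, where $\rho(x_0)$ is chosen large enough that Proposition~\ref{capalog} applies to both $B_\omega(x_0,R)$ and $B_\omega(x_0,2n)$ (for $n\geq N(R)$) and that Proposition~\ref{annulusharnack1} applies with $m=R$, $r=1$. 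The main thing to verify carefully is the two-sidedness of the Harnack comparison on $S_R$ uniformly in $n$, but this is immediate from Proposition~\ref{annulusharnack1} since its constant does not depend on $n$ nor on the particular harmonic function; the rest is bookkeeping of the two-sided capacity bounds through the first-passage decomposition.
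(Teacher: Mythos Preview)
Your proof is correct and follows essentially the same route as the paper: both factor $G_{2n}(x_0,x_0)-G_{2n}(x,x_0)$ as $G_{2n}(x_0,x_0)$ times an escape probability, control the first factor via the capacity estimate of Proposition~\ref{capalog}, and control the second via the annulus Harnack inequality of Proposition~\ref{annulusharnack1} together with a first-passage decomposition through the sphere $\{D_\omega(x_0,\cdot)=R\}$. The only cosmetic difference is that the paper phrases the argument with a general inner radius $r$ (and then specializes to $\tau_{x_0}$), whereas you take $r=1$ from the outset.
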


\begin{proof}
Let $x_0\in \C_{\infty} $ and $r>0$.
Write  $\io\tau_{r}:=\inf \{ k> 0 ; X_k\in B(x_0,r)\}$
and for $m\geq 1$, write 
  $\io\sigma_{m}:=\inf \{ k> 0 ; X_k\notin B(x_0,m)\}$.

Note that for all $n>3\mu m$ where $\mu$ is the constant  
 that appears in (\ref{AP1}), $P_{\cdot}(\sigma_{n}< \tau_{r})$ is harmonic in
$\± B(x_0, 3\mu m)\setminus B(x_0,r) $.
Then by the annulus Harnack inequality (proposition
\ref{annulusharnack1}),
if $m$ is sufficiently large and $D(y, x_0)=m$, then
$\io  \sum_{x\in B(x_0,r)}   \pi(x) P_{x}(\sigma_{n}< \tau_{r}) $
\begin{eqnarray*}
   & = & \sum_{x\in  B(x_0,r)}    \sum_{x'; D(x_0, x') =m} 
\pi(x) P_{x}(X(\sigma_m)=x', \sigma_m< \tau_{r}) P_{x'}(\sigma_{n} < \tau_{r})\\
& \asymp & P_y(\sigma_{n} < \tau_{r}) \sum_{x\in  B(x_0,r)}
\sum_{x'; D(x_0, x') =m}   \pi(x) P_{x}(X(\sigma_m)=x', \sigma_m< \tau_{r})\\
 & \asymp &  P_y(\sigma_{n} <  \tau_{r})\capa_m(B(x_0,r)).
\end{eqnarray*}

By $f_1(y,m,n)\asymp f_2(y,m,n)$ here, we  mean that there is a
constant $c\geq 1$ 
which does not depend on $y,m,n$ nor on $\omega$ and $r$,
 and such that $\P_p$-a.s for $m$ is sufficiently large and $D(y, x_0)=m$, then
 $$0< c^{-1} f_1(y,m,n) \leq f_2(y,m,n)\leq c f_1(y,m,n).$$

Then by  the capacity estimates (\ref{capabds}), for $m = D(y, x_0) $,
\begin{eqnarray}
  \label{tobeimpr}
  P_y(\sigma_{n}<\tau_{r}) \asymp \frac{\capa_{n}(B(x_0,r))}{
  \capa_m(B(x_0,r))}\asymp\frac {\ln m}{ \ln n} = \frac {\ln D(y, x_0)}{ \ln n}.
\end{eqnarray}

It follows from (\ref{tobeimpr})  and the capacity estimates
(\ref{capabds}) that  for $m$ sufficiently large, $D(x,x_0)=m$
and   $n>3\mu m$, 
$$ \begin{array}{rcl}
G_{n}(x_0,x_0) - G_{n}(x,x_0) & = & G_{n}(x_0,x_0) -P_x(\tau_{x_0} < \sigma_{n}) G_{n}(x_0,x_0)\\
  & = & G_{n}(x_0,x_0) P_x(\tau_{x_0} > \sigma_{n})\\
   & \asymp & \ln n \frac{\ln D(x, x_0)}{\ln n}\\
\end{array}
$$

Then (\ref{asympg2}) follows by (\ref{greenker}).
\end{proof}

We will need to work in sets defined in terms of $g(\cdot,x_0)$.
  Let 
  \begin{equation}\label{eqtilB}
    \widetilde B_n :=\widetilde B(x_0, n)   :=   \{x\in \C_{\infty} ;
  g(x,x_0) < \ln n\} \ \hbox{and} \   \widetilde \sigma_n  :=  \inf\{k\geq 0 ;
  X_k\notin \widetilde B(x_0, n)\}.
  \end{equation}
Note that by (\ref{asympg2}), for all $n$ sufficiently large,
\begin{eqnarray}\label{ballincl}
  B(x_0, n^{1/c_6})\subset \widetilde B(x_0, n) \subset B(x_0, n^{c_6}).
\end{eqnarray}

\begin{lem}
There is a constant $C > 0$ such that, $\P_p$-a.s.,
for any  non empty finite subset $A$  of $\C_{\infty} $ and $x_0\in
 A$,  if $m$ sufficiently large and $n>(3\mu m)^{c_6}$, then
\begin{eqnarray*}\label{borinf}
  \min_{y; m = D(y, x_0) }   P_y(\widetilde \sigma_n< \tau_A)  \geq   C(\ln m / \ln n ).
\end{eqnarray*}
\end{lem}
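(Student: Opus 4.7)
The plan is to reduce the event $\{\widetilde\sigma_n<\tau_A\}$ to an annulus crossing event of concentric balls centered at $x_0$, to which the already proved estimate (\ref{tobeimpr}) applies directly. Two comparisons do the job: one that replaces $\tau_A$ by the hitting time of a ball containing $A$, and one that replaces $\widetilde\sigma_n$ by the exit time of a ball containing $\widetilde B_n$.

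First I would handle the target set $A$. Since $A$ is finite, I can choose $r>0$ with $A\subset B(x_0,r)$. Any trajectory that reaches $A$ must already have entered $B(x_0,r)$, so $\tau_r\leq \tau_A$ pointwise and
$$P_y(\widetilde\sigma_n<\tau_A)\ \geq\ P_y(\widetilde\sigma_n<\tau_r).$$
Next, to replace $\widetilde\sigma_n$ by a ball exit time, I would use the containment $\widetilde B(x_0,n)\subset B(x_0,n^{c_6})$ from (\ref{ballincl}). This yields $\widetilde\sigma_n\leq \sigma_{n^{c_6}}$, whence $\{\sigma_{n^{c_6}}<\tau_r\}\subset\{\widetilde\sigma_n<\tau_r\}$ and
$$P_y(\widetilde\sigma_n<\tau_r)\ \geq\ P_y(\sigma_{n^{c_6}}<\tau_r).$$

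I would then apply (\ref{tobeimpr}) with outer radius $n^{c_6}$ and inner radius $r$. The standing hypothesis $n>(3\mu m)^{c_6}$, together with $c_6\geq 1$, gives $n^{c_6}\geq n>3\mu m$, which is exactly the regime where (\ref{tobeimpr}) holds, producing
$$P_y(\sigma_{n^{c_6}}<\tau_r)\ \asymp\ \frac{\ln m}{\ln n^{c_6}}\ =\ \frac{1}{c_6}\cdot\frac{\ln m}{\ln n}.$$
Chaining the three inequalities above yields the claim, with the minimum over $y$ with $D(y,x_0)=m$ posing no extra difficulty since the lower bound in (\ref{tobeimpr}) is uniform in such $y$.

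The subtle point, and the only place I expect friction, is the requirement that the final constant $C$ not depend on $A$. This is guaranteed by the explicit remark following (\ref{tobeimpr}) that the constant in that equivalence is uniform in the inner radius $r$ (the $r$-dependence having been cancelled via the two-sided capacity estimate (\ref{capabds})). Thus the choice of $r=r(A)$ does not degrade the constant; any $A$-dependence is absorbed into the qualitative hypothesis that $m$ be sufficiently large.
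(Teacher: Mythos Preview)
Your argument is correct and follows exactly the paper's two-step reduction: replace $\tau_A$ by $\tau_{B(x_0,r)}$ using $A\subset B(x_0,r)$, then compare $\widetilde\sigma_n$ with a ball exit time via (\ref{ballincl}), and finish with (\ref{tobeimpr}). The paper carries out the second comparison with $\sigma_{B(x_0,n^{1/c_6})}$ instead of your $\sigma_{n^{c_6}}$; since $B(x_0,n^{1/c_6})\subset\widetilde B_n$ yields the inequality in the wrong direction, your use of the outer inclusion $\widetilde B_n\subset B(x_0,n^{c_6})$ is the correct choice (and the stated hypothesis $n>(3\mu m)^{c_6}$ is more than enough for it).
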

\begin{proof}
The lemma is a consequence of   (\ref{tobeimpr}). For a finite subset
$A$ of $\C_{\infty} $ such that $x_0\in A\subset B(x_0,r)$, for $m$
sufficiently large and $n>(3\mu m)^{c_6}$,
\begin{eqnarray*}
  \min_{y; m = D(y, x_0) }   P_y(\widetilde \sigma_n< \tau_A) 
 & \geq  &  \min_{y; m = D(y, x_0) }  P_y(\widetilde \sigma_n<  \tau_{B(x_0,r)})\\
 & \geq  &   \min_{y; m = D(y, x_0) }   P_y( \sigma_{B(x_0, n^{1/c_6})}<  \tau_{B(x_0,r)})\\
 & \geq &  C(\ln m / \ln n^{1/c_6} )
\end{eqnarray*}
\vskip-5mm
\end{proof}

The harmonic measure will be expressed in terms of the function $u_A$
defined below.

\begin{defn}\label{defuA}
$\P_p$-a.s.,
for a finite subset $A$   of $\C_{\infty}(\o)$ and for a fixed $x_0\in
A$, let
  $$u_A (x,x_0)  := g(x,x_0) - E_{x, \o}
  g(X_{\overline \tau_A},x_0),\quad x\in\C_{\infty}(\o).$$
\end{defn}

Note that,
\begin{eqnarray*}
   & & u_A(\cdot , x_0) = 0 \quad\hbox{on}\quad A,\\
 & & u_A(x, x_0) \asymp \ln D_\o(x_0 , x) \quad\hbox{as}\quad D_\o(x_0 , x)\to
 \infty \quad\hbox{by}\ \ (\ref{asympg2}),\\
 & & Pu_A(x,x_0)= Pg(x,x_0) - \sum_{y\sim x} p(x,y) E_y g(X_{\overline\tau_A},x_0) 
 \\
 &   & \qquad\qquad =g(x,x_0) -\1_{x_0}(x)- E_x g(X_{\tau_A},x_0), \quad
x\in\C_{\infty}(\o).
\end{eqnarray*}

The next lemma is the analogue of \cite[Proposition 6.4.7]{LaLi10}.

\begin{prop}\label{limlnuA}
$\P_p$-a.s., for a finite subset $A$   of $\C_{\infty}(\o)$ and for $x_0\in A$
 and  $x\in A^c$,
$$ u_A(x,x_0 ) = \lim_n\  (\ln n) P_x(\widetilde\sigma_n<\tau_A).
$$
\end{prop}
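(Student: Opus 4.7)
The plan is to apply optional stopping to the function $g(\cdot,x_0)$, which is harmonic on $\C_\infty(\omega)\setminus\{x_0\}$, at the stopping time $T_n := \tau_A \wedge \widetilde\sigma_n$, and then extract the limit from the boundary contribution. By (\ref{ballincl}) the set $\widetilde B_n$ is finite, so by recurrence of the walk on $\C_\infty(\omega)$, $T_n<\infty$ $P_x^\omega$-almost surely. Since $x_0\in A$, for every $k<T_n$ one has $X_k\notin A$, hence $X_k\neq x_0$, so $M_k := g(X_{k\wedge T_n},x_0)$ is a martingale; it is bounded because $X_{k\wedge T_n}$ takes values in the finite set $\widetilde B_n\cup A$. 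Optional stopping and splitting on $\{\tau_A<\widetilde\sigma_n\}$ versus $\{\widetilde\sigma_n<\tau_A\}$ give
\begin{equation*}
g(x,x_0) = E_x^\omega\bigl[g(X_{\tau_A},x_0);\tau_A<\widetilde\sigma_n\bigr] + E_x^\omega\bigl[g(X_{\widetilde\sigma_n},x_0);\widetilde\sigma_n<\tau_A\bigr].
\end{equation*}

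Since $\tau_A<\infty$ almost surely while $\widetilde\sigma_n\to\infty$ as $\widetilde B_n\uparrow\C_\infty(\omega)$, one has $P_x^\omega(\tau_A<\widetilde\sigma_n)\to1$; as $|g(X_{\tau_A},x_0)|\leq\max_{y\in A}|g(y,x_0)|<\infty$, bounded convergence yields
\begin{equation*}
E_x^\omega\bigl[g(X_{\widetilde\sigma_n},x_0);\widetilde\sigma_n<\tau_A\bigr] \longrightarrow g(x,x_0) - E_x^\omega g(X_{\overline\tau_A},x_0) = u_A(x,x_0).
\end{equation*}

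The crux of the argument is the uniform estimate
\begin{equation*}
g(y,x_0) = \ln n + o(1) \qquad\text{uniformly in } y\in\partial\widetilde B_n,
\end{equation*}
for then the previous display rewrites as $(\ln n)\,P_x^\omega(\widetilde\sigma_n<\tau_A) + o(1)\cdot P_x^\omega(\widetilde\sigma_n<\tau_A) \to u_A(x,x_0)$; since $P_x^\omega(\widetilde\sigma_n<\tau_A)\to 0$ the correction vanishes, giving the desired limit. The lower bound $g(y,x_0)\geq\ln n$ is immediate from the definition of $\widetilde B_n$. For the matching upper bound, any $y\in\partial\widetilde B_n$ has a neighbor $y'\in\widetilde B_n$ with $g(y',x_0)<\ln n$, and (\ref{ballincl}) together with (\ref{asympg2}) gives $D_\omega(y',x_0)\geq n^{1/c_6}-1$. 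I apply Proposition \ref{holderperco} to the positive harmonic function $g(\cdot,x_0)$ on the ball $B_\omega(y',R)$ with $R=D_\omega(y',x_0)/2$ (which excludes $x_0$); using (\ref{asympg2}) to bound $\max_{B_\omega(y',R)} g(\cdot,x_0)=O(\ln n)$, one gets
\begin{equation*}
|g(y,x_0)-g(y',x_0)| \leq c\,R^{-\nu}\max_{B_\omega(y',R)}g(\cdot,x_0) = O\bigl(n^{-\nu/c_6}\ln n\bigr) \to 0.
\end{equation*}
The main subtlety is that Proposition \ref{holderperco} requires the random condition $R\geq R_0(y',\omega)$, and $y'$ varies with $n$; I handle this uniformly by combining the stretched-exponential tail (\ref{HarnackBarlow2}) with Borel--Cantelli to conclude that $\P_p$-a.s.\ $R_0(z,\omega)\leq C(\ln|z|_1)^{1/\eps}$ for all but finitely many $z\in\C_\infty(\omega)$. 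By (\ref{AP2}), $|y'|_1 \geq D_\omega(y',x_0)/\mu$ grows polynomially in $n$, so the bound on $R_0(y',\omega)$ is only polylogarithmic while $R\asymp n^{1/c_6}$ is polynomial, and the hypothesis $R\geq R_0(y',\omega)$ holds uniformly over $y\in\partial\widetilde B_n$ for all sufficiently large $n$.
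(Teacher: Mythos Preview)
Your proof is correct and follows essentially the same approach as the paper: optional stopping applied to $g(\cdot,x_0)$, together with the uniform boundary estimate $g(y,x_0)=\ln n+o(1)$ on $\partial\widetilde B_n$ obtained from Proposition~\ref{holderperco} and a Borel--Cantelli argument based on (\ref{HarnackBarlow2}). The only cosmetic differences are your choice to center the H\"older ball at the interior neighbour $y'$ with radius $D_\omega(y',x_0)/2$ (the paper centers at the boundary point with radius $n^{1/c_6}$) and a slightly different packaging of the Borel--Cantelli step.
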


\begin{proof}
Let $R_0(z,\o)$ be as the  Harnack inequality for  the supercritical
cluster  (\ref{HarnackBarlow2}).
By   (\ref{AP1}) of Antal and
Pisztora, and by (\ref{ballincl})
$$\sum_n \sum_{z\in\partial \widetilde B(x_0, n)}\P_p(z\in\C_{\infty}, R_0(z, \cdot ) \geq n^{1/c_6})\leq C\sum_n n^{2c_6}\exp(-c_3 n^{\eps/c_6}) <\infty.
$$

Therefore, by Borel-Cantelli, 
there is $\Omega_1 \subset\Omega$ with $\P_p(\Omega_1)=1$, 
 such that for all $\omega\in\Omega_1$ there is $n_0$ such that for
 all $n\geq n_0$ and for all $z \in\partial \widetilde B(x_0,n)$,
 $R_0(z)< n^{1/c_6}$.

Let $z\in\partial \widetilde B(x_0, n)$ where $n\geq n_0$. Then there is 
$z'\in \widetilde B(x_0, n)$ such that $z'\sim z$ and
$$g(z',x_0) < \ln n \leq  g(z,x_0).$$
Moreover, by (\ref{ballincl}), $D(z, x_0)> n^{1/c_6}$.
Then by Hölder's continuity property given in proposition
\ref{holderperco} and by (\ref{asympg2}),
\begin{eqnarray}
0\leq \  g(z,x_0) - \ln n  & \leq  &  g(z,x_0) - g(z',x_0)\nonumber \\
 & \leq  &  c \  \left( \frac{1}{n^{1/c_6}}  \right)^{\nu}
 \max_{B(z,n^{1/c_6})} g(\cdot,x_0)\nonumber \\
 & \leq  &  \frac c  C\  \left( \frac{1}{n^{1/c_6}} \right)^{\nu}\ln n.\label{firsteq}
\end{eqnarray}

By the optional stopping theorem applied to the martingale $g(X_k,x_0)$,
$k\geq 0$ and for $n$ large enough and $ x\in \widetilde B(x_0, n)\setminus A$,
\begin{eqnarray}
  g(x,x_0) & = & E_x \left[g(X_{\overline \tau_A\wedge \widetilde \sigma_n},
    x_0)\right],\nonumber\\
 & = & P_x(\widetilde \sigma_n<\tau_A) E_x \left[g(X_{ \widetilde \sigma_n},
    x_0)\mid \widetilde \sigma_n<\tau_A \right]\nonumber \\
 & &  + P_x(\tau_A < \widetilde \sigma_n ) E_x \left[g(X_{\tau_A},
    x_0)\mid \tau_A <\widetilde \sigma_n  \right] .\label{secondeq}
\end{eqnarray}
But
\begin{eqnarray*}
 \lim_n P_x(\tau_A < \widetilde \sigma_n )   E_x \left[g(X_{ \tau_A},
    x_0)\mid \tau_A <\widetilde \sigma_n  \right] 
  & = &  \lim_n  E_x \left[g(X_{ \tau_A},
    x_0) ; \tau_A <\widetilde \sigma_n  \right] \\
& = &  E_x g(X_{\tau_A},
    x_0) 
\end{eqnarray*}
Therefore by (\ref{firsteq}) and (\ref{secondeq}), $ u_A(x,x_0 ) = \lim_n (\ln n) P_x(\widetilde\sigma_n<\tau_A).$
\end{proof}

We can  now prove  the analogue of  lemma \ref{lemma} for the
supercritical cluster. Theorem  II  will follow
from this lemma and from proposition \ref{limlnuA} above.


\begin{lem} \label{lemma2d} 
Let $p>p_c(\Z^2)$. 
Let $\Omega_1$ and $R_0(x,\omega)$ be as in  the Harnack
inequality for the percolation cluster (\ref{HarnackBarlow2}).
 There is $\nu'>0$ such that the following holds.

Let $\o\in\Omega_1$ and let $A$ be a finite subset of  $\C_\infty(\o)$.
Fix $x_0\in A$.

Then there is $N_0=N_0(x_0, A, \o)$ such that for all $n>N_0$,
for all  $ y\in A$ and $z\in \partial \widetilde B(x_0, n)$,
 \begin{equation}\label{maineq2d}
  H_{A\cup \partial \widetilde B(x_0,n)}(y, z) 
= P_y( \tau_A > \widetilde\sigma_{n} )  H_{\partial \widetilde B(x_0,n)}(x_0,z) \left[1+   O \Big( \frac{\ln n}{n^{\nu'}} \Big) \right]
\end{equation}
where $\widetilde B_n$ and $\widetilde \sigma_n$ are as in (\ref{eqtilB}).
$\nu'>0$ depends on  the Hölder exponent given by proposition
\ref{holderperco}
and the constants given in (\ref{asympg2})
The constant in $O(\cdot)$ depends on $\o$ and $A$ and on the constants 
that appear  in (\ref{HarnackBarlow1}),  (\ref{HarnackBarlow2}) 
and in proposition \ref{holderperco}.
\end{lem}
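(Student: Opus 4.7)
The plan is to mimic the proof of Lemma \ref{lemma} step-by-step in the recurrent percolation setting, replacing the weak Harnack inequality by Barlow's Harnack inequality (\ref{HarnackBarlow1}) and the associated Hölder continuity of Proposition \ref{holderperco}. I would fix $r_A \geq R_0(x_0, \o)$ with $A \subset B_\o(x_0, r_A)$ and restrict attention to $n$ so large that $n^{1/c_6} \geq 4 r_A \vee R_0(x_0, \o)$; by the inclusion (\ref{ballincl}) this ensures $B_\o(x_0, n^{1/c_6}) \subset \widetilde B(x_0, n)$, providing the ambient room needed for the Harnack and Hölder estimates.

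The first step, the analogue of equation (\ref{3}), is to apply the Hölder estimate to the harmonic function
$$f(x) := H_{\partial \widetilde B(x_0, n)}(x, z) = P_x^\o(X_{\widetilde\sigma_n} = z),\qquad z \in \partial \widetilde B(x_0, n).$$
Proposition \ref{holderperco}, applied with $R = n^{1/c_6}/2$, bounds $|f(u) - f(x_0)|$ by $c\,(D_\o(x_0,u)/n^{1/c_6})^\nu \max_{B_\o(x_0, n^{1/c_6}/2)} f$, while the Harnack inequality (\ref{HarnackBarlow1}) applied to $f$ on $B_\o(x_0, n^{1/c_6})$ bounds the maximum by $c_1 f(x_0)$. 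For $u \in \partial B_\o(x_0, r_A)$ this gives
$$P_u^\o(X_{\widetilde\sigma_n} = z) = H_{\partial \widetilde B(x_0,n)}(x_0, z)\bigl[1 + O(n^{-\nu/c_6})\bigr].$$

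Steps two and three then transcribe (\ref{4}), (\ref{4b}) and the final decomposition of Lemma \ref{lemma} almost verbatim. Setting $\eta := \inf\{j \geq \tau_A : X_j \in \partial B_\o(x_0, r_A)\}$ and using the strong Markov property at $\eta$, the identity above propagates to
$$P_x^\o(X_{\widetilde\sigma_n \wedge \tau_A} = z) = H_{\partial \widetilde B(x_0,n)}(x_0, z)\, P_x^\o(\tau_A > \widetilde\sigma_n)\bigl[1 + O(n^{-\nu/c_6})\bigr]$$
for every $x \in \partial B_\o(x_0, r_A)[\widetilde B(x_0, n), A]$. Finally, since any trajectory from $y \in A$ to $\partial \widetilde B(x_0, n)$ must exit through this crossing set, a last first-passage decomposition yields (\ref{maineq2d}) with, say, $\nu' = \nu/(2c_6)$; the $\ln n$ factor in the stated error is a convenient loosening, since $n^{-\nu/c_6} \leq (\ln n)\,n^{-\nu'}$ for large $n$.

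The main obstacle I anticipate is the careful scale matching: Hölder continuity requires an ambient ball of twice the oscillation radius, and Barlow's Harnack requires radii exceeding the random threshold $R_0(x_0, \o)$. The inclusion (\ref{ballincl}) provides just enough room to apply both once $n$ is sufficiently large relative to $r_A$ and $R_0(x_0, \o)$, which is why both the threshold $N_0$ and the implicit constants end up depending on $\o$ and on $A$.
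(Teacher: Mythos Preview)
Your outline follows the paper's argument closely up to the analogue of (\ref{3}), but there is a genuine gap in the passage you describe as ``transcribe (\ref{4}), (\ref{4b}) \ldots almost verbatim''. In Lemma \ref{lemma} the step from (\ref{3}) and (\ref{4}) to (\ref{4b}) requires solving
\[
P_x ( X_{\widetilde\sigma_n} =z)=
P_x ( X_{\widetilde\sigma_n} =z\mid\tau_A > \widetilde\sigma_n)\,P_x(\tau_A > \widetilde\sigma_n)
+P_x ( X_{\widetilde\sigma_n} =z\mid\tau_A \leq \widetilde\sigma_n)\,(1-P_x(\tau_A > \widetilde\sigma_n))
\]
for the first conditional probability, which introduces a factor $1/P_x(\tau_A>\widetilde\sigma_n)$ in the error term. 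In the transient setting of Lemma \ref{lemma} this escape probability is bounded below by a positive constant and can be absorbed. On the recurrent two-dimensional cluster, however, $P_x(\tau_A>\widetilde\sigma_n)\to 0$ as $n\to\infty$; the paper controls it via the lower bound (\ref{borinf}), $P_x(\tau_A>\widetilde\sigma_n)\geq C\ln m/\ln n$, which in turn rests on the capacity estimate of Proposition \ref{capalog}. This is exactly where the $\ln n$ in the stated error $O\bigl((\ln n)\,n^{-\nu'}\bigr)$ comes from: it is the cost of recurrence, not a convenient loosening.

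Concretely, with your notation the correct outcome of the subtraction is
\[
P_x ( X_{\widetilde\sigma_n\wedge\tau_A} =z)
= H_{\partial\widetilde B(x_0,n)}(x_0,z)\,P_x(\tau_A>\widetilde\sigma_n)
\Bigl[1+\frac{1}{P_x(\tau_A>\widetilde\sigma_n)}\,O\bigl(n^{-\nu/c_6}\bigr)\Bigr],
\]
and you need (\ref{borinf}) to turn the bracket into $[1+O((\ln n)\,n^{-\nu/c_6})]$. Your proposal never invokes (\ref{borinf}) or the underlying capacity bound, so the claimed error $O(n^{-\nu/c_6})$ is unjustified and the argument does not close as written.
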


\begin{proof}  
Let $m$ be sufficiently large so that $A\subset B(x_0, m)$ 
and so that (\ref{borinf}) holds for all  $n>(3\mu m)^{c_6}$.

 For  $R_1> \max\{ R_0(x_0,\omega), (3\mu m)/4,  m \}$,
let $B_1 = B(x_0,R_1)$, $B_2 = B(x_0, 2R_1)$,
$B_3 = B(x_0, 4R_1) $.
Set $n={(4R_1)^{c_6}}$ and let  $\widetilde B_n = \widetilde B(x_0, n)$ and
$\widetilde \sigma_n$ be as in (\ref{eqtilB}). Note that
by (\ref{ballincl}),
$B_3\subset \widetilde B_n$ and (\ref{borinf}) holds.

For $z\in \partial \widetilde B_n $,  consider the function 
$$f(x)= P_x ( X_{\widetilde\sigma_{n}} =z), \quad x\in \C_\infty(\o). $$

Since $f$ is harmonic on $B_2$, by proposition \ref{holderperco}, 
   for all $u\in B_1$,
\begin{equation*} 
 |f(u)-f(x_0)|\leq c \Big( \frac{D(x_0, u)}{R_1}\Big)^{\nu}
 \max_{B_2} f.  
\end{equation*}
In particular, for $u\in\partial B(x_0,  m)$,
\begin{equation} \label{12d}
|f(u)-f(x_0)|\leq c \Big( \frac{m}{R_1}\Big)^{\nu}
 \max_{B_2} f. 
\end{equation}

Now  by considering $f$ harmonic on $B_3$, 
by (\ref{HarnackBarlow1}), we have that
\begin{equation} \label{22d} 
\max_{B_2 } f \leq c_1 f(x_0). 
\end{equation}

Therefore, by (\ref{12d}) and (\ref{22d}),  for all $u\in \partial
B(x_0, m) $, 
\begin{equation} \label{32d}  
P_u ( X_{\widetilde\sigma_{n}} =z) =   H_{\partial \widetilde B_n}(x_0,z) \left[1+O \Big(
\Big( \frac{m}{R_1}\Big)^{\nu}\Big) \right]. 
\end{equation}

On the set $\{ \tau_A<\widetilde\sigma_{n}\}$, we let $\eta=\inf\{ j\geq
\tau_A;\ X_j\in  \partial B(x_0, m) \}$. 

Then using (\ref{32d}),
we obtain that for all $x\in \partial B(x_0,  m) [\widetilde B_n , A]$ (see
(\ref{crossing}) for the notation),
\begin{eqnarray}\label{42d}
P_x ( X_{\widetilde\sigma_{n}} =z | \tau_A<\widetilde\sigma_{n}) &=& \sum_{u\in \partial B(x_0,  m) } 
P_x ( X_{\eta} =u | \tau_A < \widetilde\sigma_{n}) P_u ( X_{\widetilde\sigma_{n}} =z )\nonumber
\\
&=&   H_{\partial \widetilde B_n}(x_0,z) \left[1+O \Big(
\Big( \frac{m}{R_1}\Big)^{\nu}\Big) \right].
\end{eqnarray}
Let $x \in \partial B(x_0, m) [ \widetilde B_n , A ]$.
By  (\ref{32d}), (\ref{42d}) and (\ref{borinf}), we get
from the relation 
\begin{eqnarray*}
  P_x ( X_{\widetilde\sigma_{n}} =z)  & =  & 
P_x ( X_{\widetilde\sigma_{n}} =z|\tau_A > \widetilde\sigma_{n}) P_x( \tau_A
> \widetilde\sigma_{n} ) \\
 & & \qquad + P_x ( X_{\widetilde\sigma_{n}} =z|\tau_A \leq \widetilde\sigma_{n})(1-P_x(
\tau_A > \widetilde\sigma_{n} )),
\end{eqnarray*}
that
 \begin{eqnarray*} 
P_x ( X_{\widetilde\sigma_{n}} =z|\tau_A >\widetilde\sigma_{n}) 
 &  =  &   H_{\partial \widetilde B_n}(x_0,z) \left[1+\frac{1}{P_x(\tau_A >\widetilde\sigma_{n})}    O \Big(
\Big( \frac{m}{R_1}\Big)^{\nu}\Big) + O \Big(
\Big( \frac{m}{R_1}\Big)^{\nu}\Big)\right]\\
&  =  &   H_{\partial \widetilde B_n}(x_0,z) 
\left[1+   O \Big( \frac{\ln n}{\ln m} \Big(
  \frac{m}{R_1}\Big)^{\nu}\Big) \right]\\
&  =  &   H_{\partial \widetilde B_n}(x_0,z) 
\left[1+   O \Big( \frac{\ln n}{n^{\nu'}} \Big) \right]
\end{eqnarray*}
where $\nu'= \nu/c_6 >0$ and where the constant in the last $O(\cdot)$
now depends on 
$\o$ and $A$.
This can also be written as,
\begin{equation}\label{4b2d}
P_x ( X_{\widetilde\sigma_{n}\wedge \tau_A} =z  ) = 
  H_{\partial \widetilde B_n}(x_0,z)P_x( \tau_A > \widetilde\sigma_{n} ) 
 \left[1+   O \Big( \frac{\ln n}{n^{\nu'}} \Big) \right].
\end{equation}

Note that every path from $y\in A$ to $\partial \widetilde B_n$ must go through
some vertex
of $\partial B(x_0, m) [ \widetilde B_n , A ]$.
So, for all $y \in A $ and for all $z\in \partial \widetilde B_n$,
\begin{eqnarray*}
P_y ( X_{\widetilde\sigma_{n}\wedge \tau_A} =z )
&=& \sum_{x \in \partial B(x_0, m) [ \widetilde B_n, A ]} 
 P_y (X_{\tau_{\partial B(x_0, m) [ \widetilde B_n , A ]}\wedge \tau_A} =x )P_x ( X_{\widetilde\sigma_{n}\wedge \tau_A} =z )
\nonumber\\
&\stackrel {(\ref{4b2d})} =&   H_{\partial \widetilde B_n}(x_0,z) \left[1+   O \Big( \frac{\ln n}{n^{\nu'}} \Big) \right]\nonumber \\
& \ & \hspace{0.5cm} \times \sum_{x \in \partial B(x_0,  m) [
  \widetilde B_n , A ]} 
 P_y (X_{\tau_{\partial B(x_0,  m) [ \widetilde B_n, A ]}\wedge \tau_A} =x )P_x(
 \tau_A > \widetilde\sigma_{n} )
 \nonumber \\
 &=&  H_{\partial \widetilde B_n}(x_0,z) \left[1+   O \Big( \frac{\ln n}{n^{\nu'}} \Big) \right]
 P_y( \tau_A > \widetilde\sigma_{n} ).
\end{eqnarray*}
Hence (\ref{maineq2d}) holds with $N_0=(4 \max\{ R_0(x_0,\omega), (3\mu m)/4,  m \})^{c_6}$.
\end{proof}

\subsection{The existence of the harmonic measure}

We now show how to obtain  Theorem  II  from lemma \ref{lemma2d}.

\begin{proof}
Let $y\in A$.
Let $\widetilde B_n$ and $\widetilde \sigma_n$ be as in
(\ref{eqtilB}).
 For $x \notin \widetilde B_n$, by (\ref{laweq1}), by reversibility of the
 Markov chain and  by (\ref{maineq2d}), for all $n > N_0$,
\begin{eqnarray}
 \pi(x)  H_A(x,y)&  =  &  \pi(x)  P_x(X_{\tau_A} = y )\nonumber \\
 & =  &  \pi(x)  \sum_{z\in \partial \widetilde B_n}
  G_{A^c}(x,z)  H_{A\cup
  \partial \widetilde B_n}(z,y) \nonumber\\
& = &  \sum_{z\in \partial \widetilde B_n}
 G_{A^c}(z,x) \pi(y) H_{A\cup
  \partial \widetilde B_n}(y,z) \nonumber\\
& = &  \sum_{z\in \partial \widetilde B_n}
 G_{A^c}(z,x)\pi(y) P_y(\widetilde\sigma_n< \tau_A)   H_{\partial \widetilde B_n}(x_0,z)  \left[1+O \Big(
n^{-\nu'}\Big) \right]\nonumber\\
  & = &  \pi(y) P_y(\widetilde\sigma_n< \tau_A)  \sum_{z\in \partial \widetilde B_n}
 G_{A^c}(z,x) H_{\partial \widetilde B_n}(x_0,z)  \left[1+O \Big(
n^{-\nu'}\Big) \right].\label{sansx}
\end{eqnarray}

At this point for the supercritical cluster of ${\mathbb Z}^d$, $d\geq
3$,
it suffices to sum over $y\in A$ and divide the equations.
However, since the walk is recurrent on the supercritical
percolation cluster of $\Z^2$, $P_{y}(\widetilde\sigma_n< \tau_A)\to 0$
as $n\to \infty$, this would lead to an indeterminate limit.
But by (\ref{sansx}),
\begin{eqnarray*}
  H_A(x,y)  & = & \frac {\pi(x)  H_A(x,y)} {\pi(x) \sum_{y'\in A} H_A(x,y') } \\
  & = &   \frac{\pi(y) P_y(\widetilde\sigma_n< \tau_A)
  }{\sum_{y'\in A}\pi(y') P_{y'}(\widetilde\sigma_n< \tau_A) }  \left[1+O \Big(
n^{-\nu'}\Big) \right]
\end{eqnarray*}
and by proposition \ref{limlnuA},
\begin{eqnarray}
  \lim_{x\to\infty}   H_A(x,y)  
 & = & \lim_{n\to\infty}\frac{(\ln n) \pi(y) P_y(\widetilde\sigma_n< \tau_A)
  }{(\ln n) \sum_{y'\in A}\pi(y') P_{y'}(\widetilde\sigma_n< \tau_A) }\left[1+O \Big(
n^{-\nu'}\Big) \right] \nonumber\\
 & = & \frac{ \pi(y) Pu_A(y,x_0) }{ \sum_{y'\in A}\pi(y') P u_A(y',x_0) }.\label{ident}
\end{eqnarray}
\end{proof}

\section{Proof of proposition \ref{annulusharnack1}}\label{proofAH1}
\bi
In this proof, we keep the notations of \cite{Bar04} except for the
graph
distance which will still be denoted by $D(x,y)$.

For  a cube $Q$ of side $n$, let
$Q^+ := A_1\cap {\mathbb Z}^d$ and  $Q^\oplus := A_2\cap {\mathbb
  Z}^d$ 
where $A_1$ and $A_2$ are the cubes in ${\mathbb R}^d$ with the same
center as $Q$ and with side length $\frac{3}{2} n$ and $\frac {6}{5}n$
respectively.
Note that $Q\subset Q^\oplus\subset Q^+$.

$\C(x)$ is the connected open cluster that contains $x$.
${\mathcal C}_Q(x)$, which will be called the open $Q$ cluster,
 is the set of vertices connected to $x$ by an open path
within $Q$.
And ${\mathcal C}^\vee(Q)$ is the largest open  $Q$ cluster
(with some rule for breaking ties).

Set $\alpha_2= (11(d+2))^{-1}$.

\begin{proof}
By \cite[lemma 2.24]{Bar04} and by Borel-Cantelli lemma,
for all $x\in\Z^d$, there is $N_x$ such that for all $n>N_x$,
$L(Q)$ (see \cite[p. 3052]{Bar04}) holds for all cubes $Q$ of side $n$
with $x\in Q$.

Let $z\in\Z^d$ and let $n> N_z=N_z(\o)$.

Let $Q$ be a cube of side $n$ which contains $z$.

Let $x_0\in {\mathcal C}^\vee(Q^+)\cap Q^{\oplus}$
with $Q(x_0, r+k_0)^+\subset Q^+$ where $C_H n^{\alpha_2} \leq r \leq
n$ and $k_0=k_0(p,d)$ is the integer chosen in \cite[p. 3041]{Bar04}.

Let $R$ be such that
\begin{equation}
  \label{conditEH1}
  B_\omega(x_0, (3/2)R\ln R) \subset Q^{\oplus}\qquad\hbox{and}
\end{equation}
\begin{equation}
  \label{conditEH2}
  (C_H n^{\alpha_2})^{d+2} \leq (C_H n^{\alpha_2})^{4(d+2)} < R <
  R\ln R < n.
\end{equation}
Then by \cite[Theorem 2.18c]{Bar04}, 
$B_\omega(x_0,R\ln R)$ is $(C_V, C_P, C_W)$- very good
with $$N_{B_\omega (x_0, R\ln R)} \leq C_H n^{\alpha_2}$$
with the constants given in \cite[section 2]{Bar04}.

Then by \cite[Theorem 5.11]{Bar04} and (\ref{conditEH2}), 
there is a constant $C_1$, which depends only on $d$ and on the
constants $C_V, C_P, C_W$, such that
 if $D(x_0,x_1) \leq \frac { 1 }{3} R\ln R$
and if $h:\overline{ B(x_1, R)} \to {\mathbb R}$ is positive and harmonic in
$B(x_1, R)$, then
\begin{equation}
\max_{B(x_1, R/2)} h \leq C_1 \min_{B(x_1, R/2)} h.\label{HarCv}
\end{equation}
Note that since $4\alpha_2(d+2) = 4/11 < 1/2$, the conditions (\ref{conditEH2}) are
verified for $R=2\sqrt n$ when $n$ large enough.
 
We now apply a standard chaining argument to a well chosen covering by
balls (see for instance
\cite[chapters 3 and 9]{Tel06}). Let $x_0\in\Z^2$ and consider
environments
such that $x_0\in \C_\infty(\o)$.
The main difficulty to carry out the chaining argument is to check
that  the intersection of ``consecutive''  balls is not empty. 
The remainder of the proof is to construct an appropriate
covering of $ \{x\in\C_\infty  ; D(x_0,x)=m\}$, for $m$ large enough, 
 with a finite number balls,
which does not depend on $x_0$, $m$ or $\o$,  and such that
 the Harnack inequality (\ref{HarCv}) holds in each ball.

Let $\delta_1, \delta_2$ and $\delta_3$ be three positive real numbers
such that
\begin{equation}\label{deltas}
  2\delta_2<\delta_1\quad\hbox{and}\quad\delta_1+ 2\delta_2  < \delta_3 < \frac{1}{5\mu}\left(\frac{4}{5} - \delta_2\right).
\end{equation}
For instance, choose $\delta_3$ so that $0<\delta_3< 4/(50\mu)$, then
choose
$\delta_1$ so that $0<2\delta_1<\delta_3$ and finally 
choose $\delta_2 $ so that $\delta_2< \min\{\delta_1/2 , 4/(50\mu)\}$.

Let $n>N_{x_0}$.

Furthermore, take $n$ large enough so that there is a Kesten's grid in $Q$
with constant $C_K$ and $R(Q)$ holds (by \cite[lemma 2.8]{Bar04}). 
That is in each vertical and each horizontal
strip of width $C_K\ln n$ contains at least $c(p)C_K\ln n$ open disjoint
channels. Moreover, since $R(Q)$ holds,
${\mathcal C}^\vee(Q)\subset {\mathcal C}^\vee(Q^+)$.
In particular, $x_0\in {\mathcal C}^\vee(Q^+)\cap Q^{\oplus}$.

Furthermore by (\ref{AP1}) and Borel-Cantelli, if $m$ is large enough
then for all $x,y\in \C_\infty$ such that $ \vert x\vert_1\leq 3\mu m$,
$\vert y\vert_1\leq 3\mu m$ and $\io \vert x-y\vert_1\geq m(\delta_1-2\delta_2)/\mu$
we have $$\vert x-y\vert_1 \leq D(x,y) \leq \mu \vert x-y\vert_1.$$
Set $\io \frac{R}{2}= m\delta_3 = \sqrt n$.

Furthermore, take $m$ large enough so that 
(\ref{conditEH1}) and (\ref{conditEH2}) are verified as well as
$$C_K\ln n < m\delta_2 /\mu,\quad
3m\mu<\frac{1}{3} R\ln R\quad\hbox{and} \quad r < 4m\delta_3.$$

Instead of constructing a finite covering of $\{x\in\C_\infty ; D(x_0,x) = m\}$, 
it is easier to construct a finite covering of
the region $\{x\in\C_\infty ; \frac {4m}{5\mu}   \leq \vert x - x_0\vert_1 \leq 2 m\}$
which is a larger subset of $\Z^2$.

Let $\I := \{ (i;j) \in {\mathbb N}^2 ; 4/(5\delta_1)   \leq  i+j \leq 2\mu/\delta_1.\}$ Let $M$ be the
cardinal of $\I$.

Let $x_{i,j} = x_0 + (i m\delta_1/\mu; j m\delta_1 / \mu)$ with $(i;j) \in \I$.
Then for each $x_{i,j}$ with  $ (i;j) \in \I$, there is $\widetilde x_{i,j}
\in \C_\infty$ such that $\vert x_{i,j} - \widetilde x_{i,j}\vert_1
\leq m\delta_2/\mu$.

We proceed similarly in the other three quadrants to obtain
 a set of $4M$ vertices which we denote by ${\mathcal D}$.
Note that $M$ does not depend on $m$.
 
The finite covering of the region $\frac {4m}{5\mu}   \leq \vert x -
x_0\vert_1 \leq 2 m$
is $$\{B(\widetilde x , m\delta_3 ),\quad \widetilde x \in {\mathcal
  D}\}.$$
Note that each ball contains 
the center of the four neighbouring balls
 except those on the boundary of the region. But these are connected
 to at least one neighbouring ball. Indeed, if $\widetilde x,
\widetilde y\in {\mathcal D}$ are neighbouring centers then by (\ref{deltas}),
$$D(\widetilde x, \widetilde y)<\mu\vert \widetilde x- \widetilde
y\vert
<m(\delta_1 + 2\delta_2) < m\delta_3 .
$$

If  $\widetilde x\in {\mathcal D}$ then  by (\ref{deltas}),
$$D(x_0,\widetilde x)>\frac{m}{\mu}\left(\frac{4}{5} -
  \delta_2\right)> 5m\delta_3,$$
 $D(x_0,\widetilde x)<\mu\vert x_0 - \widetilde x\vert_1< 2m\mu$
and $\mu\left(2m+m\delta_2/\mu\right)< 3m\mu$.

Therefore, $B(x_0,r)  $ does not belong to a ball of the covering and
$u$ is harmonic in each ball $B(\widetilde x , 2m\delta_3 )$
with $\widetilde x \in {\mathcal
  D}.$
Then the Harnack inequality holds for 
$\io R = 2m\delta_3 $
since for all $\widetilde x \in{\mathcal D}$,
$$D(x_0,\widetilde  x)< 2m\mu<
\frac{1}{3} R\ln R.$$
\end{proof}

\vskip1cm
{\it{Acknowledgment}}: The authors would like to thank Pierre Mathieu
for numerous discussions and particularly,
for pointing out the usefulness of Kesten's lemma.
\bino
 This research was supported by the French ANR projects MEMEMO and
  MEMEMO2.

\bibliographystyle{plain}
\bibliography{HD-bibliography}

\vskip2cm
\begin{center}
\begin{tabular}{llll}
Daniel Boivin &  &  &  Clément Rau\\
Université Européenne de Bretagne &  &  & Universit\'e Paul Sabatier \\
Université de Bretagne Occidentale &  & & Institut de Math\'ematiques de Toulouse\\
Laboratoire de Mathématiques CNRS UMR 6205 &  & & route de Narbonne\\
6 avenue Le Gorgeu, CS93837 &  &  & 31400 Toulouse\\
 F-29238 Brest Cedex 3,&  & &  France \\
 France &  & &   \\
 &  & &   \\
boivin@univ-brest.fr & &  &  rau@math.ups-tlse.fr\\
http://stockage.univ-brest.fr/~boivin/   & & & http://www.math.univ-toulouse.fr/~rau/\\
\end{tabular}
\end{center}
 
\end{document}